\newtheorem{theorem}{Theorem}[section]
\newtheorem{corollary}[theorem]{Corollary}
\newtheorem{definition}[theorem]{Definition}
\newtheorem{example}[theorem]{Example}
\newtheorem{note}[theorem]{Note}
\newtheorem{proposition}[theorem]{Proposition}
\newenvironment{proof}[1][Proof]{\noindent\textbf{#1.} }{\ \rule{0.5em}{0.5em}}
\begin{document}
	\begin{center}
		\large
			Influence of ideals in compactifications\\[1ex]
		\large
		Manoranjan Singha$^*$, Sima Roy$^{**}$\\[2ex]
		\scriptsize
		\textsuperscript{*}\emph{Department of Mathematics, University of North Bengal, Darjeeling-734013, India}
		
		\textsuperscript{**}\emph{Department of Mathematics, Raja Rammohun Roy Mahavidyalaya, Hooghly-712406, India}
		
	\end{center}
	
	\date{}

\hrule
\vspace{0.2cm}
\begin{flushleft}
	\textbf{ABSTRACT}
\end{flushleft}
	One point compactification is studied in the light of ideal of subsets of $\mathbb{N}$. $\mathcal{I}$-proper map is introduced and showed that a continuous map can be extended continuously to the one point $\mathcal{I}$-compactification if and only if the map is $\mathcal{I}$-proper. Shrinking condition(C) introduced in this article plays an important role to study various properties of $\mathcal{I}$-proper maps. It is seen that one point $\mathcal{I}$-compactification of a topological space may fail to be Hausdorff but a class $\{\mathcal{I}\}$ of ideals has been identified for which one point $\mathcal{I}$-compactification coincides with the one point compactification if it is metrizable.\\

\textbf{Key words}: Ideal, $\mathcal{I}$-nonthin, one point $\mathcal{I}$-compactification, $\mathcal{I}$-proper map\\

\textbf{MSC}: Primary 54D35, 54D45; Secondary 54D55
\vspace{0.2cm}
\hrule
\footnotetext{
	$^*$E-mail: manoranjan$\_$singha@rediffmail.com
	\vspace{0.1cm}
	
	$^{**}$E-mail: rs$\_$sima@nbu.ac.in
}

\section{Introduction}
Theory of statistical convergence gets birth in the year 1951 as an extension of the concept of convergence of sequence of real numbers \cite{Fast}. But it gets acquaintance in the late twentieth century. It has huge applications in the theory of integrability and related summability methods \cite{Schoenberg}, \cite{Kocinac1}. Thin \cite{Fridy} subsets, which form ideal \cite{Kuratowski} on $\mathbb{N}$, plays main role in statistical convergence. After a long, in the year 2001,  Kostyrko et al \cite{Kostyrkoa} introduced the concept of $\mathcal{I}$-convergence using the notion of ideals. During last two decades research on the theory of $\mathcal{I}$-convergence have been reached in the peak \cite{Sever}, \cite{Savas}, \cite{Kocinac}, \cite{Cincura}, \cite{Das2}, \cite{Boccuto} etc..

Let's begin with some basic definitions and results.

For any non-empty set $X$, a family $\mathcal{I}\subset 2^X$ is called an ideal if $(1)$ $\emptyset \in \mathcal{I}$, $(2)$ $A,B\in \mathcal{I}$ implies $A\cup B\in \mathcal{I}$, and $(3)$ $A\in \mathcal{I},B\subset A$ implies $B\in \mathcal{I}$ \cite{Kuratowski}. An ideal $\mathcal{I}$ is called non-trivial if $\mathcal{I}\neq \{\emptyset\}$ and $X\notin \mathcal{I}.$ A non-trivial ideal $\mathcal{I}\subset 2^X$ is called admissible if it contains all the singleton sets \cite{Kuratowski}. Various examples of non-trivial admissible ideals are given in \cite{Kostyrkoa}.

A sequence $(x_n)_{n\in\mathbb{N}}$ in a metric space ${\rm(X,d)}$ is said to be $\mathcal{I}$-convergent to $\xi\in X$ $(\xi=\mathcal{I}-\lim_{n\to\infty} x_n)$ if and only if for each $\epsilon >0$ the set $A(\epsilon)=\{n\in\mathbb{N}: d(x_n,\xi)\geq \epsilon\}$ belongs to $\mathcal{I}$. The element $\xi$ is called the $\mathcal{I}$-limit of the sequence $(x_n)_{n\in\mathbb{N}}$ \cite{Kostyrkoa}. Throughout this article $\mathcal{I}$ is a nontrivial admissible ideal on $\mathbb{N},$ unless otherwise stated.

A sequence $x=(x_n)_{n\in M}$ in a topological space $X$ is called $\mathcal{I}$-thin, where $\mathcal{I}$ is a nontrivial admissible ideal on $\mathbb{N}$ if $M\in \mathcal{I}$ ; otherwise it is called $\mathcal{I}$-nonthin \cite{Singha}.

A topological space $X$ is $\mathcal{I}$-compact if any $\mathcal{I}$-nonthin sequence $(x_n)_{n\in K}$ in $X$ has an $\mathcal{I}$-nonthin subsequence $(x_n)_{n\in M}$ that $\mathcal{I}/_M$-converges to some point in $X$ \cite{Singha}. And then several properties of $\mathcal{I}$-compactness have been studied.

\section{$\mathcal{I}$-proper maps and $\mathcal{I}$-compactification}

\begin{definition} An $\mathcal{I}$-nonthin sequence $(x_n)_{n\in M}$ in a topological space $X$ is $\mathcal{I}$-eventually constant at $\alpha$ if $\{n\in M : x_n\neq\alpha\}\in \mathcal{I}/_M$.
\end{definition}
\begin{note} Every eventually constant sequence is $I$-eventually constant, but converse may not be true.
	For example, consider $\mathcal{I}=P(2\mathbb{N})\cup \mathcal{I}_f$, $\mathcal{I}_f$ is the collection of all finite subsets of $\mathbb{N}$ and $x_n=0,$ if n is odd and $x_n=1,$ if n is even.
\end{note}
\begin{definition} An $\mathcal{I}$-nonthin sequence $(x_n)_{n\in M}$ in a topological space $X$ is said to be $\mathcal{I}$-eventually in $S\subset X$ if $\{n\in M; x_n\notin S\}\in \mathcal{I}/_M$.
\end{definition}

\begin{definition}(\rm\cite{Singha}) A subset $A$ of a topological space $X$ is called $\mathcal{I}$-closed if $A=\overline{A}^\mathcal{I}$, where $\overline{A}^\mathcal{I}=\{x\in X;$ there exists an $\mathcal{I}$-nonthin sequence $(x_n)_{n\in L}$ in $A$ that $\mathcal{I}/_L$-converges to $x\}$. A subset $A$ of $X$ is $\mathcal{I}$-open if complement of $A$ is $\mathcal{I}$-closed that is, if an $\mathcal{I}$-nonthin sequence $(x_n)_{n\in M}$ $\mathcal{I}/_M$-converges to some point in $A$, then $(x_n)_{n\in M}$ is $\mathcal{I}$-eventually in $A$.
\end{definition}

\begin{definition}
	An admissible ideal $\mathcal{I}$ is said to satisfy shrinking condition(C) if for any set $A\notin \mathcal{I}$ there exists a subset $B$ of $A$ such that $B\notin \mathcal{I}$  and no infinite subset of $B$ is in $\mathcal{I}$.
\end{definition}

The following examples are an witness of such ideal.

\begin{example}
	\begin{enumerate}
		\item Consider the ideal $\mathcal{I}_1=P(2\mathbb{N})\cup\mathcal{I}_f$, where $\mathcal{I}_f$ is the set of all finite subsets of $\mathbb{N}$.
		\item Let $\mathcal{I}_2=\{A; A$ intersects finite $\Delta_i's\}$ and $\mathcal{I}_3=\{A; A\cap\Delta_i=$ finite, for all $i\in\mathbb{N}\}$  where $\mathbb{N}=\underset{i\in\mathbb{N}}\cup \Delta_i$, each $\Delta_i$ is infinite and $\Delta_i\cap\Delta_j=\phi,$ for all $i\neq j$.
	\end{enumerate}
	Then $\mathcal{I}_1,\mathcal{I}_2,\mathcal{I}_3$ satisfy shrinking condition(C).
\end{example}

\begin{proposition}\label{natural}
	Let $\mathcal{I}$ satisfies shrinking condition(C). If $(n_k)_{k\in \mathbb{N}}$ be any sequence of natural numbers with $\underset{k\rightarrow\infty}\lim {n_k}=\infty$ and the range set of $(n_k)_{k\in \mathbb{N}}$ is not in $\mathcal{I}$, then there exists a monotone strictly increasing subsequence of $(n_k)_{k\in \mathbb{N}}$ whose range set also not in $\mathcal{I}$.
\end{proposition}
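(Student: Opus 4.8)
The plan is to first record the range set $R = \{n_k : k \in \mathbb{N}\}$ and to extract from it, via shrinking condition(C), a set $B \subseteq R$ with $B \notin \mathcal{I}$ such that no infinite subset of $B$ lies in $\mathcal{I}$. Since $\mathcal{I}$ is admissible it contains every finite set, so $B \notin \mathcal{I}$ forces $B$, and hence $R$, to be infinite; moreover the stated property of $B$ may be read as the assertion that \emph{every} infinite subset of $B$ is outside $\mathcal{I}$. This reading is the engine of the argument, because it means I need not arrange for my strictly increasing subsequence to enumerate all of $B$ — it suffices that its range be an infinite subset of $B$.

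The second ingredient I would isolate is a consequence of $\lim_{k\to\infty} n_k = \infty$: for each fixed value $v$ the preimage $\{k : n_k = v\}$ is finite, and consequently, for any fixed index $K$, the tail value set $\{n_k : k > K\}$ differs from $R$ by only finitely many elements. Equivalently, for every index $K$ and every infinite $S \subseteq R$ there exists $k > K$ with $n_k \in S$. I would verify this by observing that $R \setminus \{n_k : k > K\} \subseteq \{n_k : k \le K\}$, which is a finite set.

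With these two facts in hand I would build the subsequence greedily. First choose $k_1$ with $n_{k_1} \in B$. Having chosen $k_1 < \cdots < k_j$ with $n_{k_1} < \cdots < n_{k_j}$ all lying in $B$, set $B_j = \{b \in B : b > n_{k_j}\}$; this is infinite, since only finitely many elements of $B$ can be $\le n_{k_j}$. Applying the tail fact with the index $K = k_j$ and the infinite set $S = B_j$ yields an index $k_{j+1} > k_j$ with $n_{k_{j+1}} \in B_j$, so that $n_{k_{j+1}} > n_{k_j}$ and $n_{k_{j+1}} \in B$. The resulting subsequence $(n_{k_j})_{j \in \mathbb{N}}$ is strictly increasing, so its range $C = \{n_{k_j} : j \in \mathbb{N}\}$ is an infinite subset of $B$, and the defining property of $B$ then gives $C \notin \mathcal{I}$, which is exactly what is required.

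I expect the only genuinely delicate point to be the greedy step. The naive attempt — to list $B$ in increasing order and match each element to an occurrence index — can fail, because the indices at which the values of $B$ appear need not respect the order of those values, so one cannot guarantee simultaneously increasing indices and increasing values. Replacing the goal ``hit all of $B$'' by ``hit an infinite subset of $B$'', together with supplying the occurrence indices through the cofinite-tail observation of the second paragraph, is precisely what makes the construction succeed.
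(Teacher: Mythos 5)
Your proof is correct. Note that the paper states this proposition without any proof at all (it is followed immediately by the application to $\mathcal{I}_d$), so there is no argument of the authors' to compare yours against; your write-up supplies the missing justification, and it does so by what is surely the intended route: shrinking condition (C) yields $B\subseteq R$ with $B\notin\mathcal{I}$ and every infinite subset of $B$ outside $\mathcal{I}$, admissibility forces $B$ to be infinite, and $\lim_{k\to\infty}n_k=\infty$ guarantees that every infinite subset of $R$ is hit by arbitrarily late indices, which drives the greedy extraction of indices $k_1<k_2<\cdots$ with $n_{k_1}<n_{k_2}<\cdots$ all in $B$. Your closing remark is also the right diagnosis of the one delicate point: one cannot in general enumerate all of $B$ by a subsequence with increasing indices, and weakening the target to ``an infinite subset of $B$'' (which condition (C) makes sufficient, since \emph{every} infinite subset of $B$ avoids $\mathcal{I}$) is precisely what makes the construction go through.
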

The ideal $\mathcal{I}_d$ consisting of subsets of $\mathbb{N}$ having natural density $0$ does not satisfy shrinking condition(C) due to Proposition \ref{natural}, since if we take a sequence $(x_n)_{n\in \mathbb{N}}$ where $x_1=2, x_2=1$ and $x_n=2^{k+1}-(r-1)$, if $n=2^k+r$, $1\leq r\leq 2^k$, $k\in \mathbb{N}$, there is no monotone strictly increasing subsequence  whose range set not in $\mathcal{I}_d$.

\begin{theorem}\label{Iclosed}
	Let $X$ be $T_1$ and $\mathcal{I}$ satisfies shrinking condition$(C)$. Let, $(x_n)_{n\in L}$ be an $\mathcal{I}$-nonthin sequence in $X$ having no $\mathcal{I}$-nonthin subsequence $(x_n)_{n\in K}$ that $\mathcal{I}/_K$-convergent. Then there exists a subset $P\notin \mathcal{I}$ such that the set $\{(x(n),n);$ $n\in P\}$ is $\mathcal{I}$-closed in $X\times\mathbb{N}^+$, where $\mathbb{N}^+$ is the one point compactification of $\mathbb{N}$.
\end{theorem}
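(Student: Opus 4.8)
The plan is to take $P$ to be exactly the set produced by applying shrinking condition $(C)$ to $L$: since $L\notin\mathcal{I}$ there is $P\subseteq L$ with $P\notin\mathcal{I}$ and no infinite subset of $P$ lying in $\mathcal{I}$; because $\mathcal{I}$ is admissible this means that a subset of $P$ belongs to $\mathcal{I}$ if and only if it is finite, i.e.\ $\mathcal{I}/_P$ is the ideal of finite subsets of $P$. Writing $G_P=\{(x_n,n):n\in P\}$, the inclusion $G_P\subseteq\overline{G_P}^{\mathcal{I}}$ is automatic (use the constant sequence at a given point of $G_P$ on any $\mathcal{I}$-nonthin index set), so everything reduces to proving $\overline{G_P}^{\mathcal{I}}\subseteq G_P$. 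I would therefore start from an arbitrary $\mathcal{I}$-nonthin sequence $(z_n)_{n\in J}$ in $G_P$, say $z_n=(x_{t_n},t_n)$ with $t_n\in P$, that $\mathcal{I}/_J$-converges to some $(a,b)\in X\times\mathbb{N}^+$; since the two projections are continuous and convergence in a product is coordinatewise, $t_n\to b$ in $\mathbb{N}^+$ and $x_{t_n}\to a$ in $X$, both in the $\mathcal{I}/_J$-sense.

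The easy half is $b\in\mathbb{N}$. Here $b$ is isolated in $\mathbb{N}^+$, so $\{n\in J:t_n\neq b\}\in\mathcal{I}$ and hence $J_b=\{n\in J:t_n=b\}$ is $\mathcal{I}$-nonthin. On $J_b$ the sequence is constantly $(x_b,b)$, and this restriction still $\mathcal{I}/_{J_b}$-converges to $(a,b)$; a constant sequence can only $\mathcal{I}$-converge into the closure of its value, so by $T_1$-ness $a=x_b$ and $(a,b)=(x_b,b)\in G_P$.

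The crux is to rule out $b=\infty$, and this is where I expect the real work to be: the reindexing $n\mapsto t_n$ need not be injective, so $\mathcal{I}$-smallness of $\{n\in J:x_{t_n}\notin U\}$ does not immediately transfer to the range $\{t_n\}$. To get around this I would apply shrinking condition $(C)$ a second time, now to $J$, obtaining $J'\subseteq J$ with $J'\notin\mathcal{I}$ and $\mathcal{I}/_{J'}$ equal to the ideal of finite subsets of $J'$. Restricting to $J'$ keeps the convergence, and because the induced ideal is now the finite ideal, $\mathcal{I}/_{J'}$-convergence is ordinary convergence: $t_n\to\infty$ and $x_{t_n}\to a$ as $n\to\infty$ through $J'$. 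Set $R'=\{t_n:n\in J'\}\subseteq P$; since $t_n\to\infty$ the set $R'$ is infinite, hence $R'\notin\mathcal{I}$, so $(x_t)_{t\in R'}$ is an $\mathcal{I}$-nonthin subsequence of $(x_n)_{n\in L}$.

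Finally I would transfer the convergence to the range. For a neighbourhood $U$ of $a$ the set $\{n\in J':x_{t_n}\notin U\}$ is finite by ordinary convergence, and since $x_t$ depends only on $t$, the image of this finite set under $n\mapsto t_n$ is precisely $\{t\in R':x_t\notin U\}$, which is therefore finite and so lies in $\mathcal{I}$. Thus $(x_t)_{t\in R'}$ $\mathcal{I}/_{R'}$-converges to $a$, giving an $\mathcal{I}$-nonthin $\mathcal{I}$-convergent subsequence of $(x_n)_{n\in L}$ and contradicting the hypothesis. Hence $b=\infty$ is impossible, every $\mathcal{I}$-limit of an $\mathcal{I}$-nonthin sequence in $G_P$ already lies in $G_P$, and $G_P$ is $\mathcal{I}$-closed with $P\notin\mathcal{I}$, as required. (Alternatively, once the ordinary convergence $t_n\to\infty$ is in hand, Proposition \ref{natural} extracts a strictly increasing, hence genuinely injective, reindexing whose range is not in $\mathcal{I}$, which is another way to produce the forbidden convergent subsequence.)
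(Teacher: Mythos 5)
Your proof is correct, and it takes a genuinely more self-contained route than the paper's. Both arguments begin identically: apply shrinking condition (C) to $L$ to get $P\notin\mathcal{I}$ whose trace ideal $\mathcal{I}/_P$ is the ideal of finite sets. But the paper then outsources the rest: it observes that $(x_n)_{n\in P}$ has no ordinarily convergent subsequence, cites Proposition 1.3 of Brown's paper (which needs $X$ to be $T_1$) to conclude that the graph $\{(x(n),n):n\in P\}$ is \emph{sequentially} closed in $X\times\mathbb{N}^+$, and then simply asserts that sequential closedness yields $\mathcal{I}$-closedness. That last step is not automatic for a general ideal, since an $\mathcal{I}$-nonthin $\mathcal{I}$-convergent sequence need not have an ordinarily convergent subsequence; justifying it requires exactly the move you make explicitly, namely a second application of condition (C) to the index set $J$ of the approximating sequence, which turns $\mathcal{I}/_{J'}$-convergence into ordinary convergence. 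Your proof folds this transfer and a direct proof of Brown's proposition (specialized to this graph) into one argument: the case $b\in\mathbb{N}$ is dispatched by the constant-sequence/$T_1$ observation, and the case $b=\infty$ is ruled out by passing to $J'$, noting the range $R'$ is infinite hence not in $\mathcal{I}$, and using the image-of-a-finite-set trick to push convergence through the possibly non-injective reindexing $n\mapsto t_n$ --- a point the paper never addresses. What the paper's route buys is brevity via citation; what yours buys is a complete argument that also patches the unproven ``sequentially closed $\Rightarrow$ $\mathcal{I}$-closed'' step, which under admissibility and condition (C) is a small lemma worth stating in its own right.
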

\begin{proof}
	Since $\mathcal{I}$ satisfies shrinking condition(C), there exists $P\subset L$ such that $P\notin \mathcal{I}$ and no infinite subset of $P$ is in $\mathcal{I}$. Therefore $(x_n)_{n\in P}$ has no convergent subsequence and from Proposition 1.3 in \cite{Brown}, the set $\{(x(n),n);$ $n\in P\}$ is sequentially closed in $X\times \mathbb{N}^+$. Henceforth  the set $\{(x(n),n);$ $n\in P\}$ is $\mathcal{I}$-closed in $X\times\mathbb{N}^+$.
\end{proof}

\begin{definition}
	Let $X$ and $Y$ be topological spaces and $f: X\rightarrow Y$ be a function.
	
	\begin{enumerate}
		\item $f$ is $\mathcal{I}$-continuous if for any $\mathcal{I}$-nonthin sequence $(x_n)_{n\in M}$ which is $\mathcal{I}/_M$-converges to $x$, then $(f(x_n))_{n\in M}$ is $\mathcal{I}/_M$-converges to $f(x)$.
		\item $f$ is $\mathcal{I}$-homeomorphism if $f$ is bijective, $\mathcal{I}$-continuous and $f^{-1}$ is $\mathcal{I}$-continuous.
		\item $f$ is an $\mathcal{I}$-embedding if $f$ yields an $\mathcal{I}$-homeomorphism between $X$ and $f(X)$.
		\item $f$  is $\mathcal{I}$-closed if image of any $\mathcal{I}$-closed set is $\mathcal{I}$-closed.
		\item $f$ is $\mathcal{I}$-proper if $f\times 1_z: X\times Z\rightarrow Y\times Z$ is $\mathcal{I}$-closed, for all spaces $Z$, provided $f$ is $\mathcal{I}$-continuous.
	\end{enumerate}
\end{definition}

\begin{note}
	Every $\mathcal{I}$-proper function is $\mathcal{I}$-closed.
\end{note}
\begin{theorem}\label{closed} Let $X$ and $Y$ be topological spaces. A function $f: X\rightarrow Y$ is $\mathcal{I}$-continuous if and only if $f^{-1}(B)$ is $\mathcal{I}$-closed for every $\mathcal{I}$-closed subset $B$ of $Y$.
\end{theorem}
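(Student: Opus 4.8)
The plan is to prove the two implications separately, using the sequential description of the $\mathcal{I}$-closure operator $\overline{(\cdot)}^\mathcal{I}$ coming from the definition of $\mathcal{I}$-closedness. The forward implication is a direct computation with this operator, whereas the reverse implication requires the auxiliary observation that every set which is closed in the original topology of $Y$ is automatically $\mathcal{I}$-closed.

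For the forward direction, I would assume $f$ is $\mathcal{I}$-continuous and let $B\subseteq Y$ be $\mathcal{I}$-closed. The inclusion $f^{-1}(B)\subseteq \overline{f^{-1}(B)}^\mathcal{I}$ is immediate, since for $x\in f^{-1}(B)$ the constant sequence at $x$ is $\mathcal{I}$-nonthin (as $\mathbb{N}\notin\mathcal{I}$) and $\mathcal{I}$-converges to $x$. For the reverse inclusion, take $x\in\overline{f^{-1}(B)}^\mathcal{I}$, so there is an $\mathcal{I}$-nonthin sequence $(x_n)_{n\in L}$ in $f^{-1}(B)$ with $\mathcal{I}/_L$-limit $x$. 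Applying $\mathcal{I}$-continuity, $(f(x_n))_{n\in L}$ is an $\mathcal{I}$-nonthin sequence in $B$ that $\mathcal{I}/_L$-converges to $f(x)$, whence $f(x)\in\overline{B}^\mathcal{I}=B$ and $x\in f^{-1}(B)$. This gives $\overline{f^{-1}(B)}^\mathcal{I}=f^{-1}(B)$, i.e. $f^{-1}(B)$ is $\mathcal{I}$-closed.

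For the converse, I would first record the bridging lemma that $\overline{A}^\mathcal{I}\subseteq\overline{A}$ for every $A\subseteq Y$: if an $\mathcal{I}$-nonthin sequence in $A$ had $\mathcal{I}$-limit $x\notin\overline{A}$, an open neighbourhood of $x$ missing $A$ would force the whole (non-ideal) index set into $\mathcal{I}$, a contradiction. Consequently any ordinary closed set is $\mathcal{I}$-closed. Now assume $f^{-1}(B)$ is $\mathcal{I}$-closed whenever $B$ is $\mathcal{I}$-closed, let $(x_n)_{n\in M}$ be $\mathcal{I}$-nonthin with $\mathcal{I}/_M$-limit $x$, and suppose for contradiction that $(f(x_n))_{n\in M}$ does not $\mathcal{I}/_M$-converge to $f(x)$. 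Then some open neighbourhood $V$ of $f(x)$ satisfies $P:=\{n\in M: f(x_n)\notin V\}\notin\mathcal{I}$. The set $C:=Y\setminus V$ is closed, hence $\mathcal{I}$-closed, so $f^{-1}(C)$ is $\mathcal{I}$-closed by hypothesis. The subsequence $(x_n)_{n\in P}$ lies in $f^{-1}(C)$, is $\mathcal{I}$-nonthin, and still $\mathcal{I}/_P$-converges to $x$ (restricting an $\mathcal{I}$-convergent sequence to a non-ideal index set preserves the limit), so $x\in f^{-1}(C)$, i.e. $f(x)\in C=Y\setminus V$ --- contradicting $f(x)\in V$.

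I expect the only genuine subtlety to lie in the reverse direction: one must manufacture, from a hypothetical failure of $\mathcal{I}$-convergence of the images, an $\mathcal{I}$-closed subset of $Y$ whose preimage detects that failure, and the natural candidate $Y\setminus V$ is closed only in the original topology. The bridging lemma $\overline{A}^\mathcal{I}\subseteq\overline{A}$ is exactly what licenses treating such an ordinary closed set as $\mathcal{I}$-closed, and passing to the non-ideal subsequence indexed by $P$ (which remains $\mathcal{I}$-nonthin and $\mathcal{I}$-convergent to $x$) is what turns the hypothesis into the desired contradiction.
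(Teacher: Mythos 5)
Your proof is correct, and it is worth noting that the paper itself gives no argument for this theorem at all (its ``proof'' reads only ``Proof is omitted''), so your proposal actually supplies the missing content rather than paralleling an existing argument. Both directions are sound: the forward implication is the straightforward computation with $\overline{(\cdot)}^{\mathcal{I}}$, using the constant sequence (which is $\mathcal{I}$-nonthin since $\mathbb{N}\notin\mathcal{I}$) to get $f^{-1}(B)\subseteq\overline{f^{-1}(B)}^{\mathcal{I}}$, and $\mathcal{I}$-continuity for the reverse inclusion. In the converse you correctly isolate the two points that need care: the bridging lemma $\overline{A}^{\mathcal{I}}\subseteq\overline{A}$ (so that the ordinary closed set $Y\setminus V$ is $\mathcal{I}$-closed and the hypothesis can be applied to it), and the fact that restricting an $\mathcal{I}/_M$-convergent sequence to an index set $P\notin\mathcal{I}$ yields an $\mathcal{I}$-nonthin sequence that still $\mathcal{I}/_P$-converges to the same limit --- this follows from heredity of the ideal, since $\{n\in P: x_n\notin U\}\subseteq\{n\in M: x_n\notin U\}\in\mathcal{I}$. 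With those two observations in place, the contradiction $f(x)\in Y\setminus V$ versus $f(x)\in V$ is immediate, and the argument is complete.
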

\begin{proof} Proof is omitted.
\end{proof}

\begin{definition} A topological space is called an $\mathcal{I}$-US space if every $\mathcal{I}$-nonthin $\mathcal{I}/_M$-convergent sequence $(x_n)_{n\in M}$ has exactly one $\mathcal{I}$-limit to which it converges.
\end{definition}

\begin{theorem}\label{proper}
	Let $X$ and $Y$ be topological spaces. Let $f: X\rightarrow Y$ be $\mathcal{I}$-continuous and $Y$ is $\mathcal{I}$-US. Consider the following conditions:
	\begin{description}
		\item[(a)] If an $\mathcal{I}$-nonthin sequence $(x_n)_{n\in M}$ in $X$ has no $\mathcal{I}$-nonthin subsequence $(x_n)_{n\in N}$ that  $\mathcal{I}/_N$-convergent in $X$, then $(f(x_n))_{n\in M}$ has no $\mathcal{I}$-nonthin subsequence $(x_n)_{n\in L}$ that $\mathcal{I}/_L$-convergent in $Y$.
		\item[(b)]  $f^{-1}(B)$ is  $\mathcal{I}$-compact for every $\mathcal{I}$-compact subset $B$ of $Y$.
		\item[(c)] If $(x_n)_{n\in M}$ is an $\mathcal{I}$-nonthin $\mathcal{I}/_M$-convergent sequence, then $f^{-1}(\bar{x})$ is $\mathcal{I}$-compact, where $\bar{x}$ is the union of $(x_n)_{n\in M}$ and its $\mathcal{I}$-limit.
		\item[(d)] $f$ is $\mathcal{I}$-proper.
		\item[(e)] $f\times 1: X\times\mathbb{N}^+\rightarrow Y\times\mathbb{N}^+$ is $\mathcal{I}$-closed, where $\mathbb{N}^+$ is the one point compactification of $\mathbb{N}$.
		
		Then, $(a)\Rightarrow (b)\Rightarrow (c)\Rightarrow(d) \Rightarrow(e)$ and if $X$ is $T_1$ and $\mathcal{I}$ satisfies shrinking condition $(C)$, $(e)\Rightarrow (a)$.
	\end{description}
\end{theorem}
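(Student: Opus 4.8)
The plan is to run the cycle $(a)\Rightarrow(b)\Rightarrow(c)\Rightarrow(d)\Rightarrow(e)$ purely through the sequential descriptions of $\mathcal{I}$-compactness and $\mathcal{I}$-closedness, leaning on the two standing hypotheses that $f$ is $\mathcal{I}$-continuous and $Y$ is $\mathcal{I}$-US; the extra assumptions ($X$ being $T_1$ and $\mathcal{I}$ satisfying (C)) will be used only to close the loop with $(e)\Rightarrow(a)$, where Theorem~\ref{Iclosed} and the one point compactification $\mathbb{N}^+$ carry the load. Two elementary facts are used repeatedly: an $\mathcal{I}$-nonthin subsequence of an $\mathcal{I}/_M$-convergent sequence again $\mathcal{I}$-converges to the same limit (the relevant index set only shrinks, and a subset of a set in $\mathcal{I}$ stays in $\mathcal{I}$), and $\mathcal{I}$-convergence in a product is coordinatewise. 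For $(a)\Rightarrow(b)$ I start with an $\mathcal{I}$-nonthin sequence $(x_n)_{n\in K}$ in $f^{-1}(B)$; its image lies in the $\mathcal{I}$-compact set $B$, so after passing to an $\mathcal{I}$-nonthin subsequence indexed by $M\subseteq K$ I may assume $(f(x_n))_{n\in M}$ $\mathcal{I}/_M$-converges to some $y\in B$. Reading (a) in contrapositive form, the fact that $(f(x_n))_{n\in M}$ has an $\mathcal{I}$-nonthin convergent subsequence (itself) forces $(x_n)_{n\in M}$ to have an $\mathcal{I}$-nonthin subsequence $(x_n)_{n\in N}$ with $\mathcal{I}/_N$-limit $x$; by $\mathcal{I}$-continuity $(f(x_n))_{n\in N}\to f(x)$, while this subsequence also tends to $y$, so $\mathcal{I}$-US of $Y$ gives $f(x)=y\in B$ and hence $x\in f^{-1}(B)$.

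For $(b)\Rightarrow(c)$ it suffices to verify that $\bar{x}$, the range of an $\mathcal{I}$-convergent sequence together with its $\mathcal{I}$-limit, is itself $\mathcal{I}$-compact, since (b) then delivers the $\mathcal{I}$-compactness of $f^{-1}(\bar{x})$ at once. I would argue by a dichotomy on an arbitrary $\mathcal{I}$-nonthin sequence in $\bar{x}$: if some single point of $\bar{x}$ is attained on an $\mathcal{I}$-nonthin index set, that gives a constant, hence $\mathcal{I}$-convergent, subsequence; otherwise, each value of $\bar{x}$ being hit only on an $\mathcal{I}$-set, one tries to force the sequence into every neighbourhood of the $\mathcal{I}$-limit. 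For $(c)\Rightarrow(d)$, fix a space $Z$ and an $\mathcal{I}$-closed $A\subseteq X\times Z$; given an $\mathcal{I}$-nonthin sequence in $(f\times 1_Z)(A)$ that $\mathcal{I}$-converges to $(y,z)$, lift it to points $(x_n,z_n)\in A$ with $f(x_n)\to y$ and $z_n\to z$. Applying (c) to $(f(x_n))$ makes $f^{-1}(\bar{y})$ $\mathcal{I}$-compact, where $\bar{y}$ is the range plus $\mathcal{I}$-limit of $(f(x_n))$; since every $x_n\in f^{-1}(\bar{y})$, the $x_n$ admit an $\mathcal{I}$-nonthin subsequence $\mathcal{I}$-converging to some $x$. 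Coordinatewise convergence and $\mathcal{I}$-closedness of $A$ then put $(x,z)\in A$, while $\mathcal{I}$-continuity together with $\mathcal{I}$-US force $f(x)=y$, whence $(y,z)=(f\times 1_Z)(x,z)\in(f\times 1_Z)(A)$, so $f\times 1_Z$ is $\mathcal{I}$-closed for every $Z$.

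The implication $(d)\Rightarrow(e)$ is merely the instance $Z=\mathbb{N}^+$ of the definition of $\mathcal{I}$-properness. The substantive step is $(e)\Rightarrow(a)$. Suppose $(x_n)_{n\in M}$ has no $\mathcal{I}$-nonthin $\mathcal{I}$-convergent subsequence, yet $(f(x_n))_{n\in L}$ $\mathcal{I}/_L$-converges to some $y$ for an $\mathcal{I}$-nonthin $L\subseteq M$. The subsequence $(x_n)_{n\in L}$ is still $\mathcal{I}$-nonthin and still has no $\mathcal{I}$-nonthin convergent subsequence, so Theorem~\ref{Iclosed} (here is where $T_1$ and condition (C) are needed) yields $P\subseteq L$ with $P\notin\mathcal{I}$ and $\{(x_n,n):n\in P\}$ $\mathcal{I}$-closed in $X\times\mathbb{N}^+$. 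By (e) its image $\{(f(x_n),n):n\in P\}$ is $\mathcal{I}$-closed in $Y\times\mathbb{N}^+$. But $P$ is infinite (an admissible ideal contains all finite sets), so $(n)_{n\in P}$ $\mathcal{I}/_P$-converges to the point at infinity $\infty$ of $\mathbb{N}^+$, while $(f(x_n))_{n\in P}\to y$ because $P\subseteq L$; hence this very $\mathcal{I}$-nonthin sequence $\mathcal{I}/_P$-converges to $(y,\infty)$, whose second coordinate is not finite, so $(y,\infty)$ lies outside the set, contradicting its $\mathcal{I}$-closedness. Therefore no such $L$ exists and (a) holds.

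The main obstacle is precisely this last implication: from the bare failure of convergence in $X$ one must manufacture a genuinely $\mathcal{I}$-closed subset of $X\times\mathbb{N}^+$ on which (e) can act, and it is exactly Theorem~\ref{Iclosed} that converts ``no $\mathcal{I}$-convergent subsequence'' into such a set while the point at infinity of $\mathbb{N}^+$ supplies the forbidden limit. A secondary delicate point is the $\mathcal{I}$-compactness of $\bar{x}$ invoked in $(b)\Rightarrow(c)$: in the ``each value $\mathcal{I}$-rare'' sub-case the absence of first countability prevents a naive diagonal extraction, so this is the spot where the argument most needs care and where the precise structure of the ideal $\mathcal{I}$ may have to be exploited.
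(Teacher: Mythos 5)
Your implications (a)$\Rightarrow$(b), (c)$\Rightarrow$(d), (d)$\Rightarrow$(e) and (e)$\Rightarrow$(a) are correct and essentially coincide with the paper's argument: (a)$\Rightarrow$(b) is the paper's proof lightly reorganized (you use the fact that the limit supplied by $\mathcal{I}$-compactness of $B$ already lies in $B$, where the paper instead invokes $\mathcal{I}$-closedness of $\mathcal{I}$-compact sets in $\mathcal{I}$-US spaces); (c)$\Rightarrow$(d) is the same lifting argument; and your (e)$\Rightarrow$(a) correctly supplies the details that the paper compresses into ``immediate from Theorem \ref{Iclosed}'' --- producing $P\subseteq L$ with $\{(x_n,n):n\in P\}$ $\mathcal{I}$-closed, pushing it forward by $f\times 1$, and exhibiting the limit $(y,\infty)$ outside the image is exactly the intended contradiction.

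The genuine gap is precisely where you flagged it: (b)$\Rightarrow$(c) requires that $\bar{x}$, the range of an $\mathcal{I}$-convergent sequence together with its $\mathcal{I}$-limit, be $\mathcal{I}$-compact, and the second horn of your dichotomy cannot be completed --- in fact the needed claim is \emph{false} for general admissible ideals. Take $\mathcal{I}=\mathcal{I}_d$ and, in $\mathbb{R}$, put $x_n=n$ when $n$ is a power of $2$ and $x_n=1/n$ otherwise; this sequence $\mathcal{I}_d$-converges to $0$, yet $\bar{x}$ contains the closed discrete set $\{2^j; j\in\mathbb{N}\}$. Now define $y_k=2^j$ for $k\in[j!,(j+1)!)$: this is an $\mathcal{I}_d$-nonthin sequence in $\bar{x}$ with no $\mathcal{I}_d$-nonthin convergent subsequence, since each value of $\bar{x}$ is attained only on a finite index set (so convergence to an isolated point of $\bar{x}$ is impossible), while every $y_k\geq 2$ (so no subsequence can converge to $0$). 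Hence $\bar{x}$ is not $\mathcal{I}_d$-compact, and taking $f$ to be the identity on $\mathbb{R}$ (which satisfies the standing hypotheses as well as (a) and (b)) shows that (b)$\Rightarrow$(c) fails as stated; a variant with the ideal of sets having finitely many odd elements shows that even shrinking condition (C) does not rescue it. The underlying reason your classical intuition breaks is that for ordinary convergence the tail condition survives arbitrary re-enumeration of the range, whereas ideal-smallness of $\{n\in M; x_n\notin U\}$ does not survive re-indexing along a new $\mathcal{I}$-nonthin set. You should not feel bad about being unable to close this step: the paper's entire proof of (b)$\Rightarrow$(c) is the phrase ``trivially hold,'' which silently imports the classical sequential fact from Brown's setting; your hesitation at this point has actually located a genuine flaw in the paper's theorem, not merely a gap in your own argument.
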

\begin{proof}
	$(a)\Rightarrow (b)$ Let $(x_n)_{n\in M}$ be an $\mathcal{I}$-nonthin sequence in $f^{-1}(B)$. If $(x_n)_{n\in M}$ has no $\mathcal{I}$-nonthin subsequence $(x_n)_{n\in L}$ that $\mathcal{I}/_L$-convergent in $X$, then $(f(x_n))_{n\in M}$ has no  $\mathcal{I}$-nonthin subsequence $(f(x_n))_{n\in L}$ that $\mathcal{I}/_L$-convergent in $Y$. Since $f(x_n)\in B$, which contradicts $\mathcal{I}$-compactness of $B$. So, $(x_n)_{n\in M}$ has an $\mathcal{I}$-nonthin subsequence $(x_n)_{n\in L}$ that $\mathcal{I}/_L$-converges to $l$(say). As $f$ is $\mathcal{I}$-continuous, $(f(x_n))_{n\in L}$ is $\mathcal{I}/_L$-convergent to $f(l)$. Also since $B$ is $\mathcal{I}$-compact and $Y$ is $\mathcal{I}$-US, $B$ is $\mathcal{I}$-closed. So $f(l)\in B$ which implies $l \in f^{-1}(B)$. Hence  $f^{-1}(B)$ is  $\mathcal{I}$-compact.
	
	$(b)\Rightarrow (c)$ and  $(d)\Rightarrow (e)$ are trivially hold.
	
	$(c)\Rightarrow (d)$ Let $Z$ be a topological space and $A$ is $\mathcal{I}$-closed in $X\times Z$. Also let $(y_n,z_n)_{n\in M}$ be an $\mathcal{I}$-nonthin sequence in $(f\times 1_Z)(A)$  that $\mathcal{I}/_M$-converges to $(y,z)$. So there exists a sequence $(x_n)_{n\in M}$ in $X$ such that $f(x_n)=y_n$. Then $(x_n,z_n)_{n\in M}$ is an $\mathcal{I}$-nonthin sequence in $A$ and $(x_n)_{n\in M}$ is an $\mathcal{I}$-nonthin sequence in $f^{-1}(\bar{y})$, where $\bar{y}$ is the union of $(y_n)_{n\in M}$ and its $\mathcal{I}$-limit. Therefore $(x_n)_{n\in M}$ has an $\mathcal{I}$-nonthin subsequence $(x_n)_{n\in L}$ that $\mathcal{I}/_L$-convergent to some point $l$(say). So, $(x_n,z_n)_{n\in L}\rightarrow_{\mathcal{I}/_L}(l,z)$. Since $A$ is $\mathcal{I}$-closed in $X\times Z$, $(l,z)\in A$. Henceforth, $(f(x_n))_{n\in M}$ is $\mathcal{I}/_L$ convergent to $f(l)$ and $y$ also. Since $Y$ is $\mathcal{I}$-US, $f(l)=y$ and so $(y,z)\in(f\times 1_Z)(A)$.
	
	$(e)\Rightarrow (a)$ This implication is immediate from Theorem \ref{Iclosed}.
\end{proof}

Since every locally compact Hausdorff space can be embedded into a compact Hausdorff space, likewise using the notion of ideals of subsets of $\mathbb{N}$, following theorem ensures that every topological space can be $\mathcal{I}$-embedded into an $\mathcal{I}$-compact space.

\begin{theorem}\label{newtop}
	Let $X$ be a topological space, then $X$ can be  $\mathcal{I}$-embedded into an $\mathcal{I}$-compact space $\widehat{X}$ so that $\widehat{X}-X$ contains exactly one point and $X$ is an open dense subspace of $\widehat{X}$.
\end{theorem}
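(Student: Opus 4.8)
The plan is to mimic the Alexandroff one-point compactification, replacing ``compact'' by ``$\mathcal{I}$-compact'' throughout. Put $\widehat{X}=X\cup\{\infty\}$ with $\infty\notin X$, and declare a set $U\subseteq\widehat{X}$ to be open precisely when either (i) $\infty\notin U$ and $U$ is open in $X$, or (ii) $\infty\in U$ and $\widehat{X}\setminus U$ is a closed $\mathcal{I}$-compact subset of $X$. First I would check that this is genuinely a topology. The only non-routine closure properties concern neighbourhoods of $\infty$: a finite intersection of type-(ii) sets corresponds to a finite union of closed $\mathcal{I}$-compact sets, and an arbitrary union involving a type-(ii) set corresponds to an intersection of closed sets sitting inside one $\mathcal{I}$-compact $K$. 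Thus two auxiliary facts are needed, namely that a finite union of $\mathcal{I}$-compact sets is $\mathcal{I}$-compact and that a closed subset of an $\mathcal{I}$-compact set is $\mathcal{I}$-compact; both follow by splitting or restricting an $\mathcal{I}$-nonthin sequence, once one notes that every closed set is $\mathcal{I}$-closed (an $\mathcal{I}$-limit of an $\mathcal{I}$-nonthin sequence is an adherent point of its range, because $\mathcal{I}$ is proper).

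Next I would verify the embedding and density clauses. Since $X=\widehat{X}\setminus\{\infty\}$ is itself open of type (i), $X$ is open in $\widehat{X}$; intersecting either kind of open set with $X$ returns exactly the open sets of $X$ (for type (ii), $(\{\infty\}\cup(X\setminus K))\cap X=X\setminus K$ is open because $K$ is closed), so the original topology of $X$ is the subspace topology. Hence the inclusion $i\colon X\hookrightarrow\widehat{X}$ is a topological homeomorphism onto the subspace $X$, and because $\mathcal{I}$-convergence is defined entirely through neighbourhoods, any topological homeomorphism preserves and reflects it and is therefore automatically an $\mathcal{I}$-homeomorphism; thus $i$ is an $\mathcal{I}$-embedding. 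For density I would show $\infty\in\overline{X}$ by noting that every basic neighbourhood $\{\infty\}\cup(X\setminus K)$ of $\infty$ meets $X$, which holds as long as $K\neq X$. The only way this can fail is the degenerate case in which $X$ itself is closed and $\mathcal{I}$-compact, i.e. $X$ is already $\mathcal{I}$-compact; I would either exclude this case or note that then $\{\infty\}$ is open and $\widehat{X}$ is a trivial disjoint union.

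The substantive step is $\mathcal{I}$-compactness of $\widehat{X}$. Given an $\mathcal{I}$-nonthin sequence $(x_n)_{n\in K}$ in $\widehat{X}$, split the index set as $A=\{n\in K:x_n=\infty\}$ and $B=K\setminus A$; since $K\notin\mathcal{I}$, at least one of $A,B$ is $\mathcal{I}$-nonthin. If $A\notin\mathcal{I}$, the subsequence $(x_n)_{n\in A}$ is constantly $\infty$ and so $\mathcal{I}/_A$-converges to $\infty\in\widehat{X}$, and we are done. If instead $B\notin\mathcal{I}$, then $(x_n)_{n\in B}$ is an $\mathcal{I}$-nonthin sequence lying in $X$, and I would run the following dichotomy: either $(x_n)_{n\in B}$ $\mathcal{I}/_B$-converges to $\infty$ --- which, by the description of neighbourhoods of $\infty$, means exactly that $\{n\in B:x_n\in K'\}\in\mathcal{I}/_B$ for every closed $\mathcal{I}$-compact $K'\subseteq X$ --- or there is some closed $\mathcal{I}$-compact $K'\subseteq X$ with $\{n\in B:x_n\in K'\}\notin\mathcal{I}$. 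In the first case the whole subsequence already $\mathcal{I}$-converges in $\widehat{X}$; in the second case the corresponding sub-subsequence is $\mathcal{I}$-nonthin and lies in the $\mathcal{I}$-compact set $K'$, so $\mathcal{I}$-compactness of $K'$ supplies an $\mathcal{I}$-nonthin further subsequence that $\mathcal{I}$-converges to a point of $K'\subseteq X\subseteq\widehat{X}$. Either way we obtain an $\mathcal{I}$-nonthin subsequence that $\mathcal{I}$-converges in $\widehat{X}$, so $\widehat{X}$ is $\mathcal{I}$-compact.

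I expect the main obstacle to be this last dichotomy, and in particular making precise that an $\mathcal{I}$-nonthin sequence in $X$ which never concentrates on a closed $\mathcal{I}$-compact set must $\mathcal{I}$-converge to $\infty$ in $\widehat{X}$; getting the quantifier over all closed $\mathcal{I}$-compact sets to interact correctly with the ideal $\mathcal{I}/_B$ is the delicate point. A secondary concern is confirming that ``closed'' and ``$\mathcal{I}$-closed'' may be used interchangeably where needed --- they are invoked both to guarantee the topology axioms and to place the limit inside $K'$ --- together with the clean handling of the degenerate $\mathcal{I}$-compact-$X$ case in the density clause.
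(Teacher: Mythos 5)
Your proposal is correct and takes essentially the same approach as the paper: the same one-point construction in which neighbourhoods of the added point are complements of closed $\mathcal{I}$-compact subsets of $X$, with your closing dichotomy being exactly the contrapositive form of the paper's claim that every $\mathcal{I}$-nonthin sequence with no $\mathcal{I}$-nonthin $\mathcal{I}$-convergent subsequence in $X$ must $\mathcal{I}$-converge to the point at infinity. Your write-up is in fact more complete than the paper's, since you verify the topology axioms and flag the degenerate case where $X$ is already $\mathcal{I}$-compact (so that the point at infinity is isolated and density fails), which the paper passes over.
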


\begin{proof}
	Let's consider a topology on $\widehat{X}$, $\widehat{\tau}$=$\tau\cup\{U\cup\{\alpha\};$ $X-U$ is closed and $\mathcal{I}$-compact in $X\}$=$\tau\cup\{U\cup\{\alpha\};$ $U$ is open in $X$ and any $\mathcal{I}$-nonthin sequence $(x_n)_{n\in L}$ in $X$ having no $\mathcal{I}$-nonthin $\mathcal{I}/_M$-convergent subsequence $(x_n)_{n\in M}$ is $\mathcal{I}$-eventually in $U$$\}$, where $\alpha$ is the point at infinity of $X$. Then $X$ is an open dense subspace of $\widehat{X}$. Let $S(X)$ be the set of all $\mathcal{I}$-nonthin sequence $(x_n)_{n\in M}$ in $X$ having no $\mathcal{I}$-nonthin  $\mathcal{I}/_L$-convergent subsequence $(x_n)_{n\in L}$. Let $U\cup\{\alpha\}$ be any open set containing $\alpha$, then all the elements of $S(X)$ is $\mathcal{I}$-eventually in $U$. Therefore all the elements of $S(X)$ is $\mathcal{I}$-converges to $\alpha$. Hence $\widehat{X}$ is $\mathcal{I}$-compact.
\end{proof}
\begin{definition} In the Theorem \ref{newtop} $\widehat{X}$ is known as one point $\mathcal{I}$-compactification of $X$.
\end{definition}

\begin{definition} A topological space is said to be $\mathcal{I}$-sequential if every $\mathcal{I}$-closed set is closed.
\end{definition}

\begin{theorem}
	If a topological space $X$ is $\mathcal{I}$ sequential, then $\widehat{X}$ is $\mathcal{I}$-sequential. In addition if $X$ is $\mathcal{I}$-US, then $\widehat{X}$ is $\mathcal{I}$-US.
\end{theorem}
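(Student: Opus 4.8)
The plan is to prove the two assertions separately, each by verifying the defining condition on the new point $\alpha$ together with the inherited behaviour on $X$.

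\textbf{First assertion: $\widehat{X}$ is $\mathcal{I}$-sequential.} We must show every $\mathcal{I}$-closed subset $A$ of $\widehat{X}$ is closed in $\widehat{X}$. First I would split into the two cases $\alpha\notin A$ and $\alpha\in A$. Suppose $\alpha\notin A$, so $A\subset X$. The key point is that $\mathcal{I}$-closedness is witnessed by $\mathcal{I}$-nonthin convergent sequences, and any such sequence in $X$ converging (in $\widehat{X}$) to a point of $X$ already converges in $X$, since $X$ is open in $\widehat{X}$; hence $A$ is $\mathcal{I}$-closed as a subset of $X$. By hypothesis $X$ is $\mathcal{I}$-sequential, so $A$ is closed in $X$. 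To conclude that $A$ is closed in $\widehat{X}$ I would show $\widehat{X}-A$ is open, which amounts to checking that $(X-A)\cup\{\alpha\}$ is a neighbourhood of $\alpha$, i.e. that $X-A$ is open and every sequence in $S(X)$ (as defined in Theorem \ref{newtop}) is $\mathcal{I}$-eventually in $X-A$; this last fact follows because no sequence in $S(X)$ has an $\mathcal{I}$-nonthin convergent subsequence, so none of its $\mathcal{I}$-nonthin subsequences can $\mathcal{I}$-converge into the (now $\mathcal{I}$-closed, hence $\mathcal{I}$-compact-complement) set $A$. In the remaining case $\alpha\in A$, write $A=(A\cap X)\cup\{\alpha\}$ and argue that $A\cap X$ is $\mathcal{I}$-closed in $X$ by the same transfer of witnessing sequences, whence closed in $X$; then $\widehat{X}-A=X-(A\cap X)$ is open in $X$ and, since it omits $\alpha$, is open in $\widehat{X}$.

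\textbf{Second assertion: if $X$ is also $\mathcal{I}$-US then $\widehat{X}$ is $\mathcal{I}$-US.} Here I would take an arbitrary $\mathcal{I}$-nonthin $\mathcal{I}/_M$-convergent sequence $(x_n)_{n\in M}$ in $\widehat{X}$ and show its $\mathcal{I}$-limit is unique. The natural reduction is: after discarding an index set in $\mathcal{I}/_M$, either the sequence lies $\mathcal{I}$-eventually in $X$ or it is $\mathcal{I}$-eventually equal to $\alpha$. If it is $\mathcal{I}$-eventually $\alpha$ then by the $\mathcal{I}$-US property of the one-point set its limit must be $\alpha$, and no point of $X$ can be a second $\mathcal{I}$-limit because $X$ is open so would absorb the sequence $\mathcal{I}$-eventually, a contradiction. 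If instead the sequence lies $\mathcal{I}$-eventually in $X$, I would suppose for contradiction it has two distinct $\mathcal{I}$-limits. If both limits are in $X$, then both are $\mathcal{I}$-limits of a sequence in $X$, contradicting that $X$ is $\mathcal{I}$-US. The delicate sub-case is one limit in $X$ and the other equal to $\alpha$: I would use that convergence to a point $p\in X$ forces the sequence to be $\mathcal{I}$-eventually in any open set around $p$, whereas convergence to $\alpha$ forces it to be $\mathcal{I}$-eventually in every neighbourhood of $\alpha$; choosing the neighbourhood $\widehat{X}-\overline{\{p\}}^{\,\mathcal{I}}$-type complement of a suitable $\mathcal{I}$-compact set separating the two then yields an index set that is simultaneously $\mathcal{I}$-large and in $\mathcal{I}/_M$, a contradiction.

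\textbf{Main obstacle.} The routine bookkeeping (moving between ``$\mathcal{I}$-closed in $X$'' and ``$\mathcal{I}$-closed in $\widehat{X}$'') is straightforward precisely because $X$ is open in $\widehat{X}$, so witnessing $\mathcal{I}$-nonthin sequences and their $\mathcal{I}$-limits transfer cleanly. The genuinely delicate step is the mixed sub-case of the $\mathcal{I}$-US argument, where one candidate limit is a point of $X$ and the other is $\alpha$: the neighbourhoods of $\alpha$ are governed by the $\mathcal{I}$-compactness-of-complement condition rather than by a metric, so ruling out simultaneous $\mathcal{I}$-convergence to $p\in X$ and to $\alpha$ requires carefully producing a neighbourhood of $\alpha$ disjoint (off an $\mathcal{I}$-small set) from a neighbourhood of $p$. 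I expect to need the defining topology $\widehat{\tau}$ and the $\mathcal{I}$-eventual-in-open-set characterisation of $\mathcal{I}$-convergence to manufacture the contradicting index set; this is where the proof must be written with the most care.
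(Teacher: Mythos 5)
Your overall architecture (case split on $\alpha$, transfer to $X$ via openness of $X$) parallels the paper's, but there are genuine gaps, and the decisive one sits exactly at the sub-case you yourself flag as delicate. For the $\mathcal{I}$-US part, your plan for the mixed sub-case (one limit $p\in X$, the other $\alpha$) is to produce a neighbourhood of $\alpha$ disjoint from a neighbourhood of $p$, built as the complement of ``a suitable $\mathcal{I}$-compact set'' of ``$\widehat{X}-\overline{\{p\}}^{\,\mathcal{I}}$-type'' around $p$. This cannot work in general: every neighbourhood of $\alpha$ in $\widehat{\tau}$ has closed, $\mathcal{I}$-compact complement, so disjoint neighbourhoods of $p$ and $\alpha$ exist only when $p$ has an $\mathcal{I}$-compact neighbourhood, i.e. only under local $\mathcal{I}$-compactness --- a hypothesis this theorem does not have (the paper invokes it only later, in Theorem \ref{locIcom}, precisely to make $\widehat{X}$ Hausdorff). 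The paper's actual construction is different and is the key idea your sketch is missing: the $\mathcal{I}$-compact set must contain the sequence itself, not merely surround $p$. With $L=\{n\in M:\, x_n\in X\}$ (which carries the sequence up to an $\mathcal{I}/_M$-small set, since $X$ is open and the sequence converges to $p\in X$), set $A=\{p\}\cup\{x_n;\, n\in L\}$. Since $X$ is $\mathcal{I}$-US, $A$ is $\mathcal{I}$-closed in $X$; since $X$ is $\mathcal{I}$-sequential (a hypothesis your second part never uses), $A$ is closed in $X$; and $A$ is $\mathcal{I}$-compact. Hence $U\cup\{\alpha\}$ with $U=X-A$ is open in $\widehat{X}$, and this neighbourhood of $\alpha$ misses the entire restricted sequence $(x_n)_{n\in L}$, contradicting convergence to $\alpha$. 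No separation of $p$ from $\alpha$ is needed, and none is available.

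There is also a gap in your first assertion, case $\alpha\notin A$. Your justification --- ``no sequence in $S(X)$ has an $\mathcal{I}$-nonthin convergent subsequence, so none of its $\mathcal{I}$-nonthin subsequences can $\mathcal{I}$-converge into $A$'' --- is a non sequitur: to get that every element of $S(X)$ is $\mathcal{I}$-eventually in $X-A$, you must exclude an $\mathcal{I}$-nonthin subsequence merely \emph{lying} in $A$, convergent or not. The correct argument is that such a subsequence would itself belong to $S(X)$, hence (by the construction in Theorem \ref{newtop}) $\mathcal{I}$-converge to $\alpha$ in $\widehat{X}$, and then $\mathcal{I}$-closedness of $A$ in $\widehat{X}$ would force $\alpha\in A$, a contradiction; your parenthetical ``$\mathcal{I}$-closed, hence $\mathcal{I}$-compact-complement'' is not a valid implication. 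The paper sidesteps this entirely by working with $\mathcal{I}$-open sets instead of $\mathcal{I}$-closed ones: if $U$ is $\mathcal{I}$-open and $\alpha\in U$, the definition of $\mathcal{I}$-open applied to the sequences of $S(X)$, all of which $\mathcal{I}$-converge to $\alpha$, immediately yields that they are $\mathcal{I}$-eventually in $U$, so $U\in\widehat{\tau}$. Finally, your opening dichotomy in the US part (``$\mathcal{I}$-eventually in $X$ or $\mathcal{I}$-eventually equal to $\alpha$'') is false for sequences hitting both $X$ and $\alpha$ on $\mathcal{I}$-nonthin index sets; it happens to be harmless here only because two distinct limits force at least one limit into $X$, and then openness of $X$ forces the sequence $\mathcal{I}$-eventually into $X$.
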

\begin{proof}
	Let, $U$ be an $\mathcal{I}$-open subset of $\widehat{X}$. Then, $U\cap X$ is $\mathcal{I}$-open in $X$ and since $X$ is $\mathcal{I}$-sequential, $U\cap X$ is open in $X$. If $\alpha\notin U$, then $U$ is open in $X$. Let $\alpha\in U$. Since $U$ is $\mathcal{I}$-open and all the elements of $S(X)$ is $\mathcal{I}$-converges to $\alpha$, then every element of $S(X)$ is $\mathcal{I}$-eventually in $U$. Also $U-\{\alpha\}=U\cap X$ is open in $X$. Hence $U$ is open in $\widehat{X}$ and $\widehat{X}$ is $\mathcal{I}$-sequential.
	
	Now let us consider an $\mathcal{I}$-nonthin sequence $(x_n)_{n\in M}$ in $\widehat{X}$ that is $\mathcal{I}/_M$-converges to $x$ and $y$ in $\widehat{X}$. If both $x,y\in X$, then $x=y$. Let, $(x_n)_{n\in M}$ $\mathcal{I}/_M$-converges to $x\in X$. Since $X$ is open in $\widehat{X}$, $\{n\in M; x_n\in X\}=L\notin \mathcal{I}/_M$. That is, $(x_n)_{n\in L}$ is an $\mathcal{I}$-nonthin sequence in $X$. Since $X$ is $\mathcal{I}$-US, $A=\{x\}\cup\{x_n; n\in L\}$ is $\mathcal{I}$-closed in $X$. So, $A$ is closed in $X$ that is $X-A=U$ is open in $X$. Therefore all the elements of $S(X)$ is $\mathcal{I}$-eventually in $U$, since $A$ is $\mathcal{I}$-compact. So, $U\cup\{\alpha\}$ is open in $\widehat{X}$ which implies $(x_n)_{n\in L}$ does not converge to $\alpha$. Hence $\widehat{X}$ is $\mathcal{I}$-US.
\end{proof}
\begin{theorem}\label{continuity}
	Let $X$, $Y$ be topological spaces and $Y$ be an $\mathcal{I}$-US space containing $X$ as an open subspace. Let, $f: Y\rightarrow \widehat{X}$ is defined by
	$$f(x)=\left\{
	\begin{array}{ll}
		x & \hbox{,~if~$x\in X;$} \\
		\alpha & \hbox{,~if~$x\notin X$}
	\end{array}
	\right.$$
	$\alpha$ is the point at infinity of $X$.
	Then, $f$ is $\mathcal{I}$-continuous.
\end{theorem}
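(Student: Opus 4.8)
The plan is to verify $\mathcal{I}$-continuity straight from the definition: I take an arbitrary $\mathcal{I}$-nonthin sequence $(y_n)_{n\in M}$ in $Y$ that $\mathcal{I}/_M$-converges to some $y\in Y$ and show that $(f(y_n))_{n\in M}$ $\mathcal{I}/_M$-converges to $f(y)$ in $\widehat{X}$. The argument splits according to whether the limit $y$ lies in $X$ or in $Y\setminus X$, since $f$ is defined piecewise and the open sets of $\widehat{X}$ fall into those sitting inside $X$ and the basic neighbourhoods $U\cup\{\alpha\}$ of the point at infinity.

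First I would treat the case $y\in X$, where $f(y)=y$. Since $X$ is open in $Y$ and $y\in X$, the sequence is $\mathcal{I}$-eventually in $X$; writing $L=\{n\in M:y_n\in X\}$ we have $M\setminus L\in\mathcal{I}/_M$ and $f(y_n)=y_n$ for $n\in L$. Because $X$ is open in $\widehat{X}$, the open subsets of $\widehat{X}$ contained in $X$ form a neighbourhood base at $y$, so it suffices to test convergence against open sets $W\subseteq X$ with $y\in W$. For such $W$ the set $\{n\in M:f(y_n)\notin W\}$ equals $(M\setminus L)\cup\{n\in L:y_n\notin W\}$, and both pieces lie in $\mathcal{I}/_M$: the first by openness of $X$, the second because $W$ is open in $Y$ and $(y_n)_{n\in M}$ $\mathcal{I}/_M$-converges to $y\in W$. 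Hence $(f(y_n))_{n\in M}$ $\mathcal{I}/_M$-converges to $y=f(y)$.

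The substantive case is $y\in Y\setminus X$, so $f(y)=\alpha$. A basic neighbourhood of $\alpha$ has the form $U\cup\{\alpha\}$ with $U$ open in $X$ and $X\setminus U$ closed and $\mathcal{I}$-compact. The set of bad indices is $K=\{n\in M:f(y_n)\notin U\cup\{\alpha\}\}=\{n\in L:y_n\in X\setminus U\}$, and I must show $K\in\mathcal{I}/_M$. I would argue by contradiction: if $K\notin\mathcal{I}$, then $(y_n)_{n\in K}$ is an $\mathcal{I}$-nonthin sequence in the $\mathcal{I}$-compact set $X\setminus U$, so it has an $\mathcal{I}$-nonthin subsequence $(y_n)_{n\in K'}$ that $\mathcal{I}/_{K'}$-converges to some $p\in X\setminus U\subseteq X$. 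On the other hand, any subsequence of the $\mathcal{I}/_M$-convergent sequence $(y_n)_{n\in M}$ indexed by a set $K'\notin\mathcal{I}$ still $\mathcal{I}/_{K'}$-converges to $y$, since restricting the index set only shrinks each exceptional set. Thus $(y_n)_{n\in K'}$ would have the two distinct $\mathcal{I}$-limits $p\in X$ and $y\in Y\setminus X$; as $Y$ is $\mathcal{I}$-US this is impossible. Therefore $K\in\mathcal{I}/_M$ and $(f(y_n))_{n\in M}$ $\mathcal{I}/_M$-converges to $\alpha=f(y)$.

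The main obstacle is this last case, and within it the delicate point is that the convergence produced by $\mathcal{I}$-compactness of the subspace $X\setminus U$ must first be promoted to convergence in $Y$ before the $\mathcal{I}$-US property can be invoked; this is exactly where the hypotheses that $X$ is open in $Y$ and that $Y$ is $\mathcal{I}$-US are genuinely used. One could alternatively run the whole argument through Theorem \ref{closed}, checking that $f^{-1}(B)$ is $\mathcal{I}$-closed for each $\mathcal{I}$-closed $B\subseteq\widehat{X}$, but the direct sequential verification above seems more transparent and keeps the two cases cleanly separated.
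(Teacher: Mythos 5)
Your proof is correct, and it follows the same overall skeleton as the paper's (split on whether the limit $y$ lies in $X$ or in $Y\setminus X$; use openness of $X$ in $Y$ for the first case and the $\mathcal{I}$-US property for the second), but the mechanism in the main case is genuinely different. The paper's proof of the case $y\in Y\setminus X$ proceeds by a three-way subcase analysis (the sequence has $\mathcal{I}$-nonthin subsequences only in $X$, only in $Y\setminus X$, or in both) and then concludes convergence to $\alpha$ by appealing to the \emph{sequential} description of the neighbourhoods of $\alpha$ built into Theorem \ref{newtop} --- namely that every $\mathcal{I}$-nonthin sequence with no $\mathcal{I}$-nonthin convergent subsequence is $\mathcal{I}$-eventually in each such neighbourhood (this is what the terse phrase ``But $\widehat{X}$ is $\mathcal{I}$-compact'' is standing in for). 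You instead test convergence directly against a basic neighbourhood $U\cup\{\alpha\}$ using the \emph{compact-complement} description of $\widehat{\tau}$: if the bad index set were $\mathcal{I}$-nonthin, an $\mathcal{I}$-nonthin portion of the sequence would sit inside the $\mathcal{I}$-compact set $X\setminus U$, yielding an $\mathcal{I}$-nonthin subsequence converging to a point $p\in X$, while that same subsequence still $\mathcal{I}$-converges to $y\in Y\setminus X$, contradicting $\mathcal{I}$-US of $Y$. Your route buys two things: it handles all three of the paper's subcases uniformly in a single contradiction argument, and it makes explicit two points the paper glosses over --- that convergence produced in the subspace $X\setminus U$ lifts to convergence in $Y$, and that restricting an $\mathcal{I}/_M$-convergent sequence to an index set $K'\notin\mathcal{I}$ preserves $\mathcal{I}/_{K'}$-convergence. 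The paper's route, by contrast, leans on the (asserted but unproved) equivalence of the two descriptions of $\widehat{\tau}$ in Theorem \ref{newtop}; yours needs only the compactness description, so it is self-contained modulo the definition of the topology.
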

\begin{proof}
	Let, $(x_n)_{n\in M}$ be an $\mathcal{I}$-nonthin sequence in $Y$ which is $\mathcal{I}/_M$-converges to $y$. If $y\in X$, then $(x_n)_{n\in M}$ is $\mathcal{I}$-eventually in $X$ (since $X$ is an open subspace of $Y)$. Then, $(f(x_n))_{n\in M}$ is $\mathcal{I}$-eventually in $X$ and $\mathcal{I}/_M$-converges to $f(y)$.
	If $y\in Y-X$, then three cases may arise. First case, if $(x_n)_{n\in M}$ has only $\mathcal{I}$-thin subsequences in $X$, then $(f(x_n))_{n\in M}$ is $\mathcal{I}$-eventually at $\alpha$ and so $f(x_n)\rightarrow_{\mathcal{I}/_M} \alpha$. For second case, let $(x_n)_{n\in M}$ has only $\mathcal{I}$-thin subsequences in $Y-X$. Since $Y$ is $\mathcal{I}$-US and $y\in Y-X$, which implies $(x_n)_{n\in M}$ has no $\mathcal{I}$-nonthin subsequence $(x_n)_{n\in L}$ $\mathcal{I}/_L$-convergent in $X$. But $\widehat{X}$ is $\mathcal{I}$-compact, so $f(x_n)\rightarrow_{\mathcal{I}/_M} \alpha$. Finally, if $(x_n)_{n\in M}$ has $\mathcal{I}$-nonthin subsequence in both $X$ and $Y-X$, then using above two cases both subsequences $\mathcal{I}$-converges to $\alpha$ and so, $f(x_n)\rightarrow_{\mathcal{I}/_M} \alpha$. Hence $f$ is $\mathcal{I}$-continuous.
\end{proof}

\begin{theorem} Let $Y$ be an $\mathcal{I}$-compact $\mathcal{I}$-US space containing  $\mathcal{I}$-sequential space $X$ as an open dense subspace and $Y-X$ has exactly one point. Then there is an $\mathcal{I}$-homeomorphism of $Y$ with $\widehat{X}$ which is the identity on $X$.
\end{theorem}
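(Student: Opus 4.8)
The plan is to produce the comparison map explicitly from Theorem \ref{continuity} and then upgrade it to an $\mathcal{I}$-homeomorphism by a compactness argument, mimicking the classical proof that a continuous bijection from a compact space onto a Hausdorff space is a homeomorphism. Write $\omega$ for the unique point of $Y-X$ and define $f\colon Y\to\widehat{X}$ by $f(x)=x$ for $x\in X$ and $f(\omega)=\alpha$. Since $X$ is open in $Y$ and $Y$ is $\mathcal{I}$-US, Theorem \ref{continuity} applies verbatim and shows that $f$ is $\mathcal{I}$-continuous; moreover $f$ is clearly a bijection (identity on $X$, sending the single extra point $\omega$ to the single extra point $\alpha$) and is the identity on $X$. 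It therefore remains only to prove that $f^{-1}$ is $\mathcal{I}$-continuous.

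By Theorem \ref{closed}, $f^{-1}$ is $\mathcal{I}$-continuous precisely when $(f^{-1})^{-1}(C)=f(C)$ is $\mathcal{I}$-closed for every $\mathcal{I}$-closed $C\subseteq Y$; that is, it suffices to show $f$ is $\mathcal{I}$-closed. First I would record that $\widehat{X}$ is $\mathcal{I}$-US. Since $X$ sits inside the $\mathcal{I}$-US space $Y$ and $\mathcal{I}/_M$-convergence in the subspace $X$ forces $\mathcal{I}/_M$-convergence to the same point in $Y$, any sequence in $X$ that $\mathcal{I}/_M$-converges in $X$ to two points would $\mathcal{I}/_M$-converge to both in $Y$, whence the two points coincide; thus $X$ is itself $\mathcal{I}$-US. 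As $X$ is also $\mathcal{I}$-sequential, the theorem immediately preceding this one yields that $\widehat{X}$ is $\mathcal{I}$-US.

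With this in hand, the $\mathcal{I}$-closedness of $f$ follows from the usual three-step chain. Let $C\subseteq Y$ be $\mathcal{I}$-closed. An $\mathcal{I}$-closed subset of the $\mathcal{I}$-compact space $Y$ is $\mathcal{I}$-compact (any $\mathcal{I}$-nonthin sequence in $C$ has, by $\mathcal{I}$-compactness of $Y$, an $\mathcal{I}$-nonthin $\mathcal{I}/_M$-convergent subsequence, whose limit lies in $C$ because $C$ is $\mathcal{I}$-closed). The $\mathcal{I}$-continuous image $f(C)$ of an $\mathcal{I}$-compact set is $\mathcal{I}$-compact (pull back an $\mathcal{I}$-nonthin sequence of $f(C)$ to one in $C$, extract a convergent $\mathcal{I}$-nonthin subsequence, and push it forward using the $\mathcal{I}$-continuity of $f$). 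Finally, an $\mathcal{I}$-compact subset of the $\mathcal{I}$-US space $\widehat{X}$ is $\mathcal{I}$-closed, which is exactly the implication already used in the proof of Theorem \ref{proper}. Hence $f(C)$ is $\mathcal{I}$-closed, so $f$ is $\mathcal{I}$-closed, and therefore $f^{-1}$ is $\mathcal{I}$-continuous by Theorem \ref{closed}.

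Combining the two directions, $f$ is a bijective $\mathcal{I}$-continuous map with $\mathcal{I}$-continuous inverse that restricts to the identity on $X$; that is, $f$ is the required $\mathcal{I}$-homeomorphism of $Y$ onto $\widehat{X}$. The main obstacle is not the bijection or its $\mathcal{I}$-continuity, which are handed to us by Theorem \ref{continuity}, but rather the verification that $\widehat{X}$ is $\mathcal{I}$-US together with the three transfer properties of $\mathcal{I}$-compactness; once those $\mathcal{I}$-analogues of the classical closed/compact lemmas are secured, the closed-map argument closes the proof.
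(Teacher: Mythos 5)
Your proposal is correct and takes essentially the same route as the paper: the paper's (one-line) proof likewise builds the identity-extending bijection, gets its $\mathcal{I}$-continuity from Theorem \ref{continuity}, and obtains $\mathcal{I}$-continuity of the inverse via Theorem \ref{closed}. Your write-up simply supplies the details the paper leaves implicit --- that $\widehat{X}$ is $\mathcal{I}$-US (via heredity of the $\mathcal{I}$-US property to the subspace $X$ and the preceding theorem) and the closed-compact transfer lemmas needed to see that images of $\mathcal{I}$-closed sets are $\mathcal{I}$-closed.
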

\begin{proof}
	The proof of the theorem follows from Theorem \ref{closed} and Theorem \ref{continuity}.
\end{proof}

In the following we have investigated the relation between one point $\mathcal{I}$-compactification and $\mathcal{I}$-proper maps. Here $\widehat{f}: \widehat{X}\rightarrow \widehat{Y}$ is the extension of $f:X\rightarrow Y$ which takes  $\alpha_x$, the point at infinity of $X$ to  $\alpha_y$, the point at infinity of $Y$.
\begin{theorem}
	Let $X, Y$ be $\mathcal{I}$-sequential, $\mathcal{I}$-US spaces and $f:X\rightarrow Y$ be continuous. Let $X$ be $T_1$ and $\mathcal{I}$ satisfies shrinking condition $(C)$. Then $\widehat{f}: \widehat{X}\rightarrow \widehat{Y}$ is continuous if and only if $f$ is $\mathcal{I}$-proper.
\end{theorem}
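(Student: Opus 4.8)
The plan is to transport the statement about ordinary continuity of $\widehat f$ into the language of $\mathcal I$-convergence, identify $\mathcal I$-continuity of $\widehat f$ with condition $(a)$ of Theorem \ref{proper}, and then invoke the equivalence $(a)\Leftrightarrow(d)$ already proved there. Three preliminary reductions set this up. Since $X$ and $Y$ are $\mathcal I$-sequential and $\mathcal I$-US, the earlier theorem makes $\widehat X$ and $\widehat Y$ again $\mathcal I$-sequential and $\mathcal I$-US. In an $\mathcal I$-sequential space closed and $\mathcal I$-closed coincide: every $\mathcal I$-closed set is closed by definition, while a closed set is $\mathcal I$-closed because an $\mathcal I$-nonthin sequence inside it cannot $\mathcal I$-converge outside it (otherwise a whole neighbourhood of the limit would be avoided on a nonthin index set). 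Combining this with Theorem \ref{closed} shows that, between $\mathcal I$-sequential spaces, $\widehat f$ is continuous if and only if it is $\mathcal I$-continuous. Finally, an ordinary continuous $f$ is automatically $\mathcal I$-continuous, since preimages of neighbourhoods are neighbourhoods, so $\mathcal I$-properness of $f$ is meaningful and Theorem \ref{proper} applies.

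For the forward implication I would assume $\widehat f$ continuous, hence $\mathcal I$-continuous, and verify condition $(a)$. Let $(x_n)_{n\in M}$ be $\mathcal I$-nonthin in $X$ with no $\mathcal I$-nonthin $\mathcal I$-convergent subsequence in $X$. By the construction in Theorem \ref{newtop} such a sequence $\mathcal I/_M$-converges to $\alpha_x$, so $\mathcal I$-continuity gives that $(f(x_n))_{n\in M}$ $\mathcal I/_M$-converges to $\alpha_y$. Were some $\mathcal I$-nonthin subsequence $(f(x_n))_{n\in L}$ to $\mathcal I/_L$-converge to a point $y\in Y$, then by restriction it would also $\mathcal I/_L$-converge to $\alpha_y$; uniqueness of $\mathcal I$-limits in the $\mathcal I$-US space $\widehat Y$ forces $y=\alpha_y\notin Y$, a contradiction. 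Thus $(a)$ holds, and the chain $(a)\Rightarrow(b)\Rightarrow(c)\Rightarrow(d)$ of Theorem \ref{proper} gives that $f$ is $\mathcal I$-proper. Note this direction uses neither $T_1$ nor condition $(C)$.

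For the converse I would assume $f$ is $\mathcal I$-proper, that is condition $(d)$; since $X$ is $T_1$ and $\mathcal I$ satisfies $(C)$, the implications $(d)\Rightarrow(e)\Rightarrow(a)$ of Theorem \ref{proper} deliver condition $(a)$, and it then suffices to prove $\widehat f$ $\mathcal I$-continuous. Let $(\xi_n)_{n\in M}$ be $\mathcal I$-nonthin in $\widehat X$ and $\mathcal I/_M$-convergent to $\xi$. If $\xi\in X$, openness of $X$ in $\widehat X$ makes the sequence $\mathcal I$-eventually in $X$ and the claim reduces to continuity of $f$, the $\mathcal I$-thin block of indices at $\alpha_x$ being irrelevant. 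The essential case is $\xi=\alpha_x$. Setting $L=\{n\in M:\xi_n\in X\}$, for a basic neighbourhood $V\cup\{\alpha_y\}$ of $\alpha_y$ one has $\{n\in M:\widehat f(\xi_n)\notin V\cup\{\alpha_y\}\}=\{n\in L:f(\xi_n)\notin V\}$, since on $M\setminus L$ the value is $\alpha_y$. If $L\in\mathcal I$ this set lies in $\mathcal I$ trivially; if $L\notin\mathcal I$, then $(\xi_n)_{n\in L}$ is $\mathcal I$-nonthin in $X$ and $\mathcal I/_L$-converges to $\alpha_x$, so uniqueness of $\mathcal I$-limits in $\widehat X$ prevents any $\mathcal I$-nonthin subsequence of it from $\mathcal I$-converging in $X$; condition $(a)$ then makes $(f(\xi_n))_{n\in L}$ a sequence with no $\mathcal I$-nonthin $\mathcal I$-convergent subsequence in $Y$, which by Theorem \ref{newtop} $\mathcal I/_L$-converges to $\alpha_y$, so the set again lies in $\mathcal I$. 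In every case $(\widehat f(\xi_n))_{n\in M}$ $\mathcal I/_M$-converges to $\alpha_y=\widehat f(\alpha_x)$, so $\widehat f$ is $\mathcal I$-continuous and hence continuous.

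The step I expect to be the main obstacle is the converse's treatment of $\xi=\alpha_x$: the index set must be split according to whether $\xi_n$ lies in $X$ or equals $\alpha_x$, and the passage from ``$\mathcal I/_L$-converges to $\alpha_x$'' to ``no $\mathcal I$-nonthin $\mathcal I$-convergent subsequence in $X$'' rests squarely on uniqueness of $\mathcal I$-limits. Without the $\mathcal I$-US hypothesis a subsequence could $\mathcal I$-converge both to $\alpha_x$ and to an interior point, severing the link to condition $(a)$. The reductions of the first paragraph, namely the identity closed $=$ $\mathcal I$-closed and the resulting coincidence of continuity with $\mathcal I$-continuity, are the other delicate point, since the entire argument is conducted in the $\mathcal I$-convergence language while the conclusion concerns ordinary continuity.
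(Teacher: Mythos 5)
Your proposal is correct and follows essentially the same route as the paper: both directions hinge on identifying continuity of $\widehat{f}$ with condition (a) of Theorem \ref{proper} (using that sequences with no $\mathcal{I}$-nonthin $\mathcal{I}$-convergent subsequence in $X$ are exactly those $\mathcal{I}$-converging to $\alpha_x$), then invoking the chain $(a)\Rightarrow(d)$ for one direction and $(d)\Rightarrow(e)\Rightarrow(a)$ (where $T_1$ and condition (C) enter) for the other, with the same index-set splitting at the point at infinity. You are in fact more careful than the paper, which leaves the converse as ``follows from Theorem \ref{proper}'' and leaves implicit the reduction between continuity and $\mathcal{I}$-continuity via $\mathcal{I}$-sequentiality that you spell out.
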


\begin{proof}
	Let $f$ be $\mathcal{I}$-proper and $\alpha_x$ and $\alpha_y$ be the point at infinity of $X$ and $Y$ respectively.
	Let $(x_n)_{n\in M}$ be a sequence in $\widehat{X}$ that $\mathcal{I}/_M$ converging to $\alpha_x$. If $(x_n)_{n\in M}$ has an $\mathcal{I}$-nonthin subsequence which takes the value $\alpha_x$, then $(\widehat{f}(x_n))_{n\in M}$ also has an $\mathcal{I}$-nonthin subsequence which takes the value $\alpha_y$. Thus  $(\widehat{f}(x_n))_{n\in M}$ $\mathcal{I}/_M$-converges to $\alpha_y$. Now if $(x_n)_{n\in M}$ has only $\mathcal{I}$-thin subsequence say $(x_n)_{n\in L}$  which takes the value $\alpha_x$, then $(x_n)_{n\in P}$ is an $\mathcal{I}$-nonthin sequence in $X$ with no $\mathcal{I}$-nonthin subsequence $(x_n)_{n\in {P_1}}$ that $\mathcal{I}/_{P_1}$ convergent in $X$, where $P=M-L$. Then by Theorem \ref{proper}, $(f(x_n))_{n\in P}$ has no  $\mathcal{I}$-nonthin subsequence $(f(x_n))_{n\in {P_1}}$ that $\mathcal{I}/_{P_1}$ convergent in $Y$ and so  $(\widehat{f}(x_n))_{n\in M}$ $\mathcal{I}/_M$-converges to $\alpha_y$. Proof of the converse part follows from Theorem \ref{proper}.
\end{proof}

\begin{definition} A topological space $X$ is locally $\mathcal{I}$-compact if every point $x\in X$ has an $\mathcal{I}$-compact neighbourhood that is for every $x\in X$, there exists an $\mathcal{I}$-compact subset $C$ of $X$ and an open set $U$ containing $x$ such that $U\subset C$.
\end{definition}

\begin{example}
	If we take $\mathcal{I}=\mathcal{I}_f\cup\{A\subset \mathbb{N};$ $A\cap\Delta_i$ is infinite for finite $i's$ and for other $i's$ $A\cap\Delta_i$ is finite$\}$,  where $\mathbb{N}=\underset{i\in \mathbb{N}}\cup\Delta_i$, $\Delta_i\cap\Delta_j=\phi,$ $i\neq j$ and $\mathcal{I}_f$ is the collection of all finite subsets of $\mathbb{N}$. Then $\mathbb{R}$ with usual topology is locally $\mathcal{I}$-compact
	but not $\mathcal{I}$-compact (\cite{Singha}).
\end{example}
\begin{example}
	$\mathbb{R}$ with usual topology is not locally $\mathcal{I}_d$-compact.
\end{example}

\begin{theorem}\label{locIcom} A topological space $X$ is locally $\mathcal{I}$-compact, Hausdorff and $\mathcal{I}$-sequential, then there exists a unique upto homeomorphic topological space $\widehat{X}$ which is $\mathcal{I}$-compact, Hausdorff, $X$ is an open dense subspace of $\widehat{X}$ and $\widehat{X}-X$ contains exactly one element.
\end{theorem}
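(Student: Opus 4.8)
The plan is to take $\widehat{X}$ to be exactly the one point $\mathcal{I}$-compactification constructed in Theorem~\ref{newtop}, and then verify the two features peculiar to this statement, namely that $\widehat{X}$ is Hausdorff and that it is determined up to homeomorphism. Theorem~\ref{newtop} already delivers an $\mathcal{I}$-compact space $\widehat{X}$ in which $X$ is an open dense subspace and for which $\widehat{X}-X=\{\alpha\}$, so only Hausdorffness and uniqueness require work. A preliminary observation I would record first is that a Hausdorff space is automatically $\mathcal{I}$-US: were an $\mathcal{I}$-nonthin sequence $(x_n)_{n\in M}$ to $\mathcal{I}/_M$-converge to distinct points $p,q$, choosing disjoint open $U\ni p$ and $V\ni q$ forces both $\{n\in M:x_n\notin U\}$ and $\{n\in M:x_n\notin V\}$ into $\mathcal{I}/_M$, whence their union $\{n\in M:x_n\notin U\cap V\}=M\in\mathcal{I}/_M$, contradicting $M\notin\mathcal{I}$. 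Consequently, exactly as in the proof of Theorem~\ref{proper}, every $\mathcal{I}$-compact subset of $X$ is $\mathcal{I}$-closed, and since $X$ is $\mathcal{I}$-sequential such a subset is in fact closed.

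For Hausdorffness of $\widehat{X}$ I would separate two points of $X$ by open sets of $X$, which remain open in $\widehat{X}$ because $\tau\subset\widehat{\tau}$. To separate a point $x\in X$ from $\alpha$ I would invoke local $\mathcal{I}$-compactness to produce an open set $U$ and an $\mathcal{I}$-compact set $C$ with $x\in U\subset C$. By the preliminary observation $C$ is closed and $\mathcal{I}$-compact, so $X-C$ is open and $(X-C)\cup\{\alpha\}$ lies in $\widehat{\tau}$ by the defining description of the neighbourhoods of $\alpha$. Since $U\subset C$, the sets $U$ and $(X-C)\cup\{\alpha\}$ are disjoint open neighbourhoods of $x$ and $\alpha$, establishing that $\widehat{X}$ is Hausdorff.

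For uniqueness, suppose $Y$ is any $\mathcal{I}$-compact Hausdorff space containing $X$ as an open dense subspace with $Y-X$ a single point. Being Hausdorff, $Y$ is $\mathcal{I}$-US, so the $\mathcal{I}$-homeomorphism uniqueness theorem established above yields an $\mathcal{I}$-homeomorphism of $Y$ onto $\widehat{X}$ fixing $X$ pointwise. To promote this to a genuine homeomorphism I would use that topologically closed sets are always $\mathcal{I}$-closed, combined with Theorem~\ref{closed}: since $X$ is $\mathcal{I}$-sequential and $\widehat{X}$ inherits $\mathcal{I}$-sequentiality from $X$, an $\mathcal{I}$-continuous bijection between $\mathcal{I}$-sequential spaces pulls back closed sets to $\mathcal{I}$-closed hence closed sets, so both the map and its inverse are continuous.

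The step I expect to be the main obstacle is the separation of $x$ from $\alpha$, that is, converting the merely $\mathcal{I}$-compact neighbourhood furnished by local $\mathcal{I}$-compactness into a closed one; this is precisely where all three hypotheses are consumed, with Hausdorffness feeding the $\mathcal{I}$-US property and $\mathcal{I}$-sequentiality converting $\mathcal{I}$-closedness into closedness. A secondary subtlety is ensuring that the competing space $Y$ in the uniqueness part is itself $\mathcal{I}$-sequential, so that the $\mathcal{I}$-homeomorphism can be upgraded to a homeomorphism; should that fail to be automatic from the given hypotheses, the honest conclusion is uniqueness up to $\mathcal{I}$-homeomorphism.
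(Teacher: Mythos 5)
Your existence-and-Hausdorffness half is essentially the paper's own proof: you take the same topology $\widehat{\tau}$ from Theorem~\ref{newtop}, separate two points of $X$ inside $X$, and separate $x\in X$ from $\alpha$ by converting the $\mathcal{I}$-compact neighbourhood $C$ supplied by local $\mathcal{I}$-compactness into a closed set and taking $(X-C)\cup\{\alpha\}$; your preliminary lemma (Hausdorff $\Rightarrow$ $\mathcal{I}$-US, hence $\mathcal{I}$-compact $\Rightarrow$ $\mathcal{I}$-closed, hence closed by $\mathcal{I}$-sequentiality) just spells out the chain the paper compresses into one sentence. Where you genuinely diverge is uniqueness. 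The paper writes down the bijection $h:\widehat{X}\rightarrow\widehat{Y}$ by hand and checks that images of open sets are open, using that closed subsets of an $\mathcal{I}$-compact space are $\mathcal{I}$-compact; you instead factor through the paper's earlier (unlabelled) theorem producing an $\mathcal{I}$-homeomorphism of any $\mathcal{I}$-compact, $\mathcal{I}$-US candidate $Y$ with $\widehat{X}$ fixing $X$, and then try to upgrade it to a homeomorphism via Theorem~\ref{closed} together with $\mathcal{I}$-sequentiality of $\widehat{X}$. Your route is more modular and reuses machinery already established; the paper's is self-contained and makes visible exactly which point-set fact is being consumed.

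The ``secondary subtlety'' you flag is, however, a genuine issue --- and it is precisely the same issue, at precisely the same mathematical step, in the paper's proof. Both arguments need that every closed $\mathcal{I}$-compact subset $C\subset X$ is closed in the competing space $Y$ (equivalently, that the canonical topology $\widehat{\tau}$ is contained in $\tau_Y$). In your version this is the missing continuity of $h:Y\rightarrow\widehat{X}$, which would require $Y$ itself to be $\mathcal{I}$-sequential; in the paper it is the sentence ``Also since $X$ is $\mathcal{I}$-sequential, $C$ is closed in $\widehat{Y}$,'' which does not follow: $\mathcal{I}$-US-ness of $\widehat{Y}$ gives only that $C$ is $\mathcal{I}$-closed in $\widehat{Y}$, and $C$ being closed in $X$ with $X$ open in $\widehat{Y}$ does not prevent $\alpha_y$ from lying in the closure of $C$ in $\widehat{Y}$; it is $\widehat{X}$, not $\widehat{Y}$, that is known to be $\mathcal{I}$-sequential. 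So your proposal proves exactly as much as the paper does: the direction you do establish ($\tau_Y\subset\tau_{\widehat{X}}$, i.e.\ continuity of $h^{-1}$) is solid, the opposite direction needs $\mathcal{I}$-sequentiality of $Y$ as an additional hypothesis, and your honest fallback --- uniqueness up to $\mathcal{I}$-homeomorphism, or uniqueness within the class of $\mathcal{I}$-sequential Hausdorff one-point $\mathcal{I}$-compactifications --- is the statement that the available arguments actually support.
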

\begin{proof} Consider the previous mentioned topology $\widehat{\tau}$ on $\widehat{X}$ (in Theorem \ref{newtop}). Let, $x$ and $y$ be two distinct points of $\widehat{X}$. If both $x,y$ in $X$, there exists two disjoint open sets $U$ and $V$ in $X$ containing $x$ and $y$ respectively. Now, let $x\in X$ and $y=\alpha$. Since $X$ is locally $\mathcal{I}$-compact, there exists an $\mathcal{I}$-compact set $C$ in $X$ containing a neighbourhood $U$ of $x$. Also since $\mathcal{I}$-compact subset of a Hausdorff space is $\mathcal{I}$-closed and $X$ is $\mathcal{I}$-sequential, which implies $C$ is closed in $X$. So, $(X-C)\cup\{\alpha\}$ is open in $\widehat{X}$. Henceforth, there exists two disjoint open sets $U$ and $(X-C)\cup\{\alpha\}$ containing $x$ and $\alpha$ respectively. So, $\widehat{X}$ is Hausdorff.
	Let, $\widehat{Y}$ be a Hausdorff one point $\mathcal{I}$-compactification of $X$. Let $\alpha_x$ and $\alpha_y$ be the point at infinity of $X$ and $Y$ respectively and
	define a map $h: \widehat{X}\rightarrow\widehat{Y}$ by
	$$f(x)=\left\{
	\begin{array}{ll}
		x & \hbox{,~if~$x\in X;$} \\
		\alpha_y  & \hbox{,~if~$x=\alpha_x.$}
	\end{array}
	\right.$$
	Then $h$ is a bijection. Let, $U$ be an open set in $\widehat{X}$ not containing $\alpha_x$, then $U\cap X=U$ is open in $X$. Therefore $h(U)=U$ is open in $X$ so in $\widehat{Y}$. Now let $U$ be an open set in $\widehat{X}$ containing $\alpha_x$. Then, $\widehat{X}-U=C$ is closed in $\widehat{X}$ and since $\widehat{X}$ is $\mathcal{I}$-compact, $C$ is $\mathcal{I}$-compact in $\widehat{X}$. Then $C\subset X$ is $\mathcal{I}$-compact in $X$ and so $\mathcal{I}$-compact in $\widehat{Y}$. Also since $X$ is $\mathcal{I}$-sequential, $C$ is closed in $\widehat{Y}$. Therefore $h(U)=\widehat{Y}-C$ is open in $\widehat{Y}$. Hence $h$ is a homeomorphism.
\end{proof}

\begin{theorem} If one point $\mathcal{I}$-compactification of a topological space $X$ is Hausdorff and $\mathcal{I}$-sequential, then $X$ is locally $\mathcal{I}$-compact.
\end{theorem}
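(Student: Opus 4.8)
The plan is to establish local $\mathcal{I}$-compactness pointwise, using the Hausdorff property of $\widehat{X}$ to separate each point of $X$ from the point at infinity $\alpha$. Fix an arbitrary $x\in X$. Since $x\in X$ while $\alpha\in\widehat{X}-X$, we have $x\neq\alpha$, so by the Hausdorff property I would first produce disjoint open sets $U,V$ of $\widehat{X}$ with $x\in U$ and $\alpha\in V$.

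Next I would read off the shape of these two sets from the description of $\widehat{\tau}$ recorded in Theorem \ref{newtop}. Because $U$ is an open set of $\widehat{X}$ missing $\alpha$ (disjointness forces $\alpha\notin U$), $U$ lies in $\tau$; that is, $U$ is open in $X$ and is a genuine neighbourhood of $x$ in $X$. On the other hand $V$ is an open set containing $\alpha$, so by the very definition of $\widehat{\tau}$ we may write $V=W\cup\{\alpha\}$ where the complement $C:=X-W$ is closed and $\mathcal{I}$-compact in $X$.

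The key step is to combine these. Since $\widehat{X}-V=(X\cup\{\alpha\})-(W\cup\{\alpha\})=X-W=C$ and $U\cap V=\emptyset$, disjointness yields $U\subseteq C$. Hence $x\in U\subseteq C$ with $U$ open in $X$ and $C$ an $\mathcal{I}$-compact subset of $X$, which is precisely an $\mathcal{I}$-compact neighbourhood of $x$ in the sense of the definition of local $\mathcal{I}$-compactness. As $x$ was arbitrary, $X$ is locally $\mathcal{I}$-compact.

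I expect the real content to be the Hausdorff axiom, which is what supplies the separating neighbourhood $V$ of $\alpha$ and hence the closed $\mathcal{I}$-compact set $C$ engulfing a neighbourhood of $x$; the remaining verifications are merely unwinding $\widehat{\tau}$. The $\mathcal{I}$-sequential hypothesis is not doing the heavy lifting in this direction—its role is to keep the two descriptions of neighbourhoods of $\alpha$ in Theorem \ref{newtop} (complements of closed $\mathcal{I}$-compact sets, versus the $\mathcal{I}$-convergence characterization) consistent—so the only point genuinely needing care is confirming that every open set containing $\alpha$ really has a closed $\mathcal{I}$-compact complement in $X$, which is immediate from the stated form of $\widehat{\tau}$.
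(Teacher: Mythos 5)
Your proof is correct and follows essentially the same route as the paper's: use the Hausdorff property to separate $x\in X$ from $\alpha$ by disjoint open sets $U,V$ of $\widehat{X}$, and conclude $x\in U\subseteq \widehat{X}-V=C$ with $C$ an $\mathcal{I}$-compact subset of $X$. The only cosmetic difference is in how $C$ is certified $\mathcal{I}$-compact --- you read it directly off the form of neighbourhoods of $\alpha$ in $\widehat{\tau}$, while the paper deduces it from $C$ being closed in the $\mathcal{I}$-compact space $\widehat{X}$ --- and your observation that the $\mathcal{I}$-sequential hypothesis goes unused in this direction matches the paper's proof, which also never invokes it.
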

\begin{proof}
	Let, $x\in X$ and since $\widehat{X}$ is Hausdorff, there exists two disjoint open sets $U$ and $V$ of $\widehat{X}$ containing $x$ and $\alpha$ respectively. So, $\widehat{X}-V=C$ is closed in $\widehat{X}$ and then $\mathcal{I}$-compact in $\widehat{X}$. Therefore $C$ is $\mathcal{I}$-compact in $X$ and $x\in U\subset C$. Hence $X$ is locally $\mathcal{I}$-compact.
\end{proof}

\begin{corollary} A topological space $X$ is locally $\mathcal{I}$-compact, Hausdorff and $\mathcal{I}$-sequential if and only if there exists a unique upto homeomorphic topological space $\widehat{X}$ which is $\mathcal{I}$ -compact, Hausdorff, $X$ is an open dense subspace of $\widehat{X}$ and $\widehat{X}-X$ has exactly one point.
\end{corollary}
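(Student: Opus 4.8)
The plan is to split the biconditional and recognise that each half is essentially one of the two results just proved, so the work reduces to assembling them and filling one genuine gap. For the forward implication, suppose $X$ is locally $\mathcal{I}$-compact, Hausdorff and $\mathcal{I}$-sequential. These are exactly the hypotheses of Theorem~\ref{locIcom}, whose conclusion is the existence of a space $\widehat{X}$, unique up to homeomorphism, that is $\mathcal{I}$-compact, Hausdorff, contains $X$ as an open dense subspace, and has $\widehat{X}-X$ a single point. Thus this direction is a direct appeal to Theorem~\ref{locIcom} and needs no additional argument.

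For the converse I would assume that such a $\widehat{X}$ exists and establish the three properties of $X$ in turn. First, $X$ is Hausdorff simply because it is a subspace of the Hausdorff space $\widehat{X}$. Second, to obtain local $\mathcal{I}$-compactness I would reproduce, for the given $\widehat{X}$, the argument of the theorem immediately preceding this corollary: for $x\in X$, Hausdorffness of $\widehat{X}$ yields disjoint open sets $U\ni x$ and $V\ni\alpha$, the complement $C=\widehat{X}-V$ is closed in the $\mathcal{I}$-compact space $\widehat{X}$ and hence $\mathcal{I}$-compact, and since $\alpha\in V$ we have $C\subset X$ with $x\in U\subset C$. This produces an $\mathcal{I}$-compact neighbourhood of $x$ inside $X$. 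I stress that this step uses only the Hausdorff and $\mathcal{I}$-compactness hypotheses on $\widehat{X}$, together with the standard fact that a closed subset of an $\mathcal{I}$-compact space is $\mathcal{I}$-compact, so no identification of $\widehat{X}$ with the canonical construction is required here.

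The delicate third property is $\mathcal{I}$-sequentiality of $X$, and I expect this to be the main obstacle, since it does not follow from Hausdorffness by itself. The route I would take is to exploit the uniqueness clause: a Hausdorff space is $\mathcal{I}$-US, so $\widehat{X}$ is an $\mathcal{I}$-compact, $\mathcal{I}$-US one-point extension of $X$, and uniqueness up to homeomorphism should let me identify it, via the identity on $X$, with the canonical one-point $\mathcal{I}$-compactification of Theorem~\ref{newtop}, from which $\mathcal{I}$-sequentiality of $X$ can be read off. The hazard to guard against is circularity: the $\mathcal{I}$-homeomorphism characterisation used for this identification already takes $\mathcal{I}$-sequentiality of $X$ as an input, so the clean way to finish is to argue directly that once the canonical one-point $\mathcal{I}$-compactification is Hausdorff (equivalently $\mathcal{I}$-US), every $\mathcal{I}$-closed subset of $X$ must already be closed in $X$. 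With the three properties in hand the converse, and hence the corollary, follows.
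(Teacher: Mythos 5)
Your decomposition coincides with the paper's implicit one: the corollary carries no proof in the paper and is meant to be the conjunction of Theorem~\ref{locIcom} (the forward direction) and the theorem immediately preceding the corollary, plus the trivial subspace argument for Hausdorffness. Your forward direction and the first two parts of your converse are correct, and you are right that the local-$\mathcal{I}$-compactness argument needs only Hausdorffness and $\mathcal{I}$-compactness of $\widehat{X}$ together with the fact that closed subspaces of $\mathcal{I}$-compact spaces are $\mathcal{I}$-compact. The genuine gap is exactly the step you flagged but never closed: $\mathcal{I}$-sequentiality of $X$. Your ``clean way to finish'' --- show that Hausdorffness of the canonical one-point $\mathcal{I}$-compactification forces every $\mathcal{I}$-closed subset of $X$ to be closed --- is not an argument but a restatement of the missing claim, and that claim is in fact false, so no amount of work will close the gap.

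Concretely, take $\mathcal{I}=\mathcal{I}_f$, for which $\mathcal{I}$-compact means sequentially compact, $\mathcal{I}$-closed means sequentially closed, and $\mathcal{I}$-sequential means sequential. Let $X=[0,\omega_1]\sqcup\mathbb{N}$, the ordinal space $[0,\omega_1]$ (compact, sequentially compact, not sequential) together with a countable discrete part. The canonical $\widehat{X}$ of Theorem~\ref{newtop} is $[0,\omega_1]\sqcup\mathbb{N}^+$: it is Hausdorff, sequentially compact, and contains $X$ as an open dense subspace with exactly one extra point. It is moreover the unique such extension: in any Hausdorff sequentially compact $Y=X\cup\{\alpha\}$ with $X$ open and dense, a $Y$-closed set $C\subseteq X$ must meet $\mathbb{N}$ in a finite set (otherwise an injective sequence in $C\cap\mathbb{N}$ would have no convergent subsequence in $Y$), while Hausdorffness, compactness of $[0,\omega_1]$, finite unions and heredity force $[0,\omega_1]$ and each of its closed subsets to be $Y$-closed; this pins the topology of $Y$ down to that of $[0,\omega_1]\sqcup\mathbb{N}^+$. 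So the right-hand side of the corollary holds, yet $X$ is not sequential, since $[0,\omega_1)$ is sequentially closed but not closed. Hence the converse implication is false as stated; this is a defect of the paper itself, whose preceding theorem takes $\mathcal{I}$-sequentiality of $\widehat{X}$ as a hypothesis and never returns $\mathcal{I}$-sequentiality of $X$ as a conclusion. The statement becomes provable along your lines if ``$\widehat{X}$ is $\mathcal{I}$-sequential'' is added to the right-hand side: then for an $\mathcal{I}$-closed $A\subseteq X$ the set $A\cup\{\alpha\}$ is $\mathcal{I}$-closed in $\widehat{X}$ (Hausdorff implies $\mathcal{I}$-US), hence closed, hence $A=(A\cup\{\alpha\})\cap X$ is closed in $X$.
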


\begin{theorem} If $f: X_1\rightarrow X_2$ is a homeomorphism of locally $\mathcal{I}$-compact, Hausdorff and $\mathcal{I}$-sequential spaces, then $f$ extends to a homeomorphism of their  one point $\mathcal{I}$-compactifications.
\end{theorem}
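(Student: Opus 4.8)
The plan is to extend $f$ in the obvious way and to verify that the extension is a homeomorphism by testing it against the two kinds of open sets that generate the topology $\widehat{\tau}$ recorded in Theorem \ref{newtop}. Let $\alpha_1$ and $\alpha_2$ denote the points at infinity of $X_1$ and $X_2$, and define $\widehat{f}:\widehat{X_1}\to\widehat{X_2}$ by $\widehat{f}(x)=f(x)$ for $x\in X_1$ and $\widehat{f}(\alpha_1)=\alpha_2$. Since $f$ is a bijection of $X_1$ onto $X_2$ and $\widehat{X_i}=X_i\cup\{\alpha_i\}$, the map $\widehat{f}$ is immediately a bijection, and $\widehat{f}^{-1}$ is the analogous extension of the homeomorphism $f^{-1}:X_2\to X_1$.

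The preliminary observation I would record is that any homeomorphism is an $\mathcal{I}$-homeomorphism: if $(x_n)_{n\in M}$ is $\mathcal{I}$-nonthin and $\mathcal{I}/_M$-converges to $x$, then for every open neighbourhood $V$ of $f(x)$ the set $f^{-1}(V)$ is an open neighbourhood of $x$, so $(x_n)_{n\in M}$ is $\mathcal{I}$-eventually in $f^{-1}(V)$ and hence $(f(x_n))_{n\in M}$ is $\mathcal{I}$-eventually in $V$; thus continuity forces $\mathcal{I}$-continuity, and the same applies to $f^{-1}$. From this it follows that $\mathcal{I}$-compactness of subsets is transported by $f$: if $C$ is $\mathcal{I}$-compact in $X_2$, then every $\mathcal{I}$-nonthin sequence in $f^{-1}(C)$ is carried by $f$ onto an $\mathcal{I}$-nonthin sequence in $C$, whose $\mathcal{I}/_L$-convergent $\mathcal{I}$-nonthin subsequence pulls back, via the $\mathcal{I}$-continuity of $f^{-1}$, to an $\mathcal{I}/_L$-convergent $\mathcal{I}$-nonthin subsequence of the original landing in $f^{-1}(C)$, so that $f^{-1}(C)$ is $\mathcal{I}$-compact in $X_1$.

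With these in hand I would verify continuity of $\widehat{f}$ directly. By the description of $\widehat{\tau}$ in Theorem \ref{newtop}, every open set of $\widehat{X_2}$ is either an open subset $V$ of $X_2$, for which $\widehat{f}^{-1}(V)=f^{-1}(V)$ is open in $X_1$ and hence in $\widehat{X_1}$ since $\alpha_2\notin V$; or a set $V\cup\{\alpha_2\}$ with $X_2-V$ closed and $\mathcal{I}$-compact in $X_2$, for which $\widehat{f}^{-1}(V\cup\{\alpha_2\})=f^{-1}(V)\cup\{\alpha_1\}$. In the second case $X_1-f^{-1}(V)=f^{-1}(X_2-V)$ is closed in $X_1$ because $f$ is a homeomorphism, and $\mathcal{I}$-compact by the transport observation above; hence $f^{-1}(V)\cup\{\alpha_1\}\in\widehat{\tau}$. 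Therefore $\widehat{f}$ is continuous.

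Finally, since $f^{-1}:X_2\to X_1$ is again a homeomorphism of locally $\mathcal{I}$-compact, Hausdorff, $\mathcal{I}$-sequential spaces, the identical argument applied to it shows that $\widehat{f^{-1}}=(\widehat{f})^{-1}$ is continuous, so $\widehat{f}$ is a homeomorphism of the one point $\mathcal{I}$-compactifications that restricts to $f$ on $X_1$. The only delicate point, and the step I expect to require the most care, is the transport of $\mathcal{I}$-compactness under $f$; everything else reduces to reading off the two families of open sets. As an alternative one could avoid the explicit check by observing that $f$ endows $X_2$ with a locally $\mathcal{I}$-compact Hausdorff $\mathcal{I}$-sequential structure identified with that of $X_1$ and appealing to the uniqueness clause of Theorem \ref{locIcom}, but the direct verification above appears cleaner.
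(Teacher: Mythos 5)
Your proposal is correct, and at its core it builds the same canonical extension $\widehat{f}$ (identity on $X_1$, point at infinity to point at infinity) that the paper uses; the difference lies entirely in how the homeomorphism property is established. The paper's own proof invokes Theorem \ref{locIcom} to get Hausdorff one point $\mathcal{I}$-compactifications of $X_1$ and $X_2$ and then simply asserts that $\widehat{f}$ is a homeomorphism, leaving the verification implicit (presumably the same open-set check as in the uniqueness half of Theorem \ref{locIcom}). You instead verify continuity of $\widehat{f}$ and of its inverse directly against the explicit topology $\widehat{\tau}$ of Theorem \ref{newtop}, after recording two facts the paper never states: that a continuous map is automatically $\mathcal{I}$-continuous (so a homeomorphism is an $\mathcal{I}$-homeomorphism), and that consequently a homeomorphism pulls back $\mathcal{I}$-compact sets to $\mathcal{I}$-compact sets. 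Both facts are right, and the subspace-versus-ambient convergence issue in the transport argument goes through as you describe. What your route buys is worth noting: it is self-contained and never actually uses local $\mathcal{I}$-compactness, Hausdorffness, or $\mathcal{I}$-sequentiality, so you have in effect proved the statement for arbitrary spaces equipped with the $\widehat{\tau}$ compactification; those hypotheses are needed only to know (via Theorem \ref{locIcom}) that the compactifications are Hausdorff and unique, which is what your closing alternative through the uniqueness clause would rely on. The paper's route is shorter on the page only because it omits the verification you carry out.
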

\begin{proof}
	Let, $f: X_1\rightarrow X_2$ be a homeomorphism, where $X_1$ and $X_2$ are locally $\mathcal{I}$-compact, Hausdroff $\mathcal{I}$-sequential spaces. Then from Theorem \ref{locIcom}, there exist Hausdorff one point $\mathcal{I}$-compactifications $\widehat{X}_1$ and $\widehat{X}_1$ of $X_1$ and $X_2$ respectively.  Define $\widehat{f}: \widehat{X}_1\rightarrow\widehat{X}_2$ by
	$$\widehat{f}(\alpha)=\left\{
	\begin{array}{ll}
		f(x) & \hbox{,~if~$x\in X_1;$} \\
		\alpha_2 & \hbox{,~if~$x=\alpha_1$}
	\end{array}
	\right.$$
	$\alpha_1$ and $\alpha_2$ be the point at infinity of $X_1$ and $X_2$ respectively.
	
	Then $\widehat{f}$ is a homeomorphism.
\end{proof}

\begin{theorem}\label{compactifi1}
	If a topological space $X$ has a Hausdorff one point $\mathcal{I}$-compactification, then every compact subset of $X$ is $\mathcal{I}$-compact.
\end{theorem}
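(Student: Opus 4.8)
The plan is to use the Hausdorffness of $\widehat{X}$ to trap any compact $C\subseteq X$ inside a genuinely $\mathcal{I}$-compact subset of $X$, and then to verify $\mathcal{I}$-compactness of $C$ by a direct sequential argument. First I would observe that a compact subset of $X$ remains compact in $\widehat{X}$, since $X$ carries the subspace topology; because $\widehat{X}$ is assumed Hausdorff, $C$ is then closed in $\widehat{X}$, and the point at infinity $\alpha$ does not belong to $C$. Applying the standard separation of a compact set from an exterior point in a Hausdorff space, I would produce disjoint open sets $U,V$ of $\widehat{X}$ with $C\subseteq U$ and $\alpha\in V$.

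Next I would exploit the explicit form of the neighbourhoods of $\alpha$ in $\widehat{\tau}$. Since $V$ is open and contains $\alpha$, it contains a basic open set $W\cup\{\alpha\}$ with $X-W$ closed and $\mathcal{I}$-compact in $X$. As $C\cap V=\emptyset$, we get $C\cap W=\emptyset$, so $C\subseteq X-W=:K$, where $K$ is $\mathcal{I}$-compact. Thus every compact subset of $X$ sits inside an $\mathcal{I}$-compact subset of $X$, and the remaining task is to transfer $\mathcal{I}$-compactness down to the closed subset $C$.

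For that I would argue directly from the definition of $\mathcal{I}$-compactness. Let $(x_n)_{n\in J}$ be an arbitrary $\mathcal{I}$-nonthin sequence in $C$. Since $C\subseteq K$ and $K$ is $\mathcal{I}$-compact, there is an $\mathcal{I}$-nonthin subsequence $(x_n)_{n\in M}$ that $\mathcal{I}/_M$-converges to some $p\in K\subseteq X$. It remains to show $p\in C$. If $p\notin C$, then, $C$ being closed in the Hausdorff space $\widehat{X}$, there is an open neighbourhood $O$ of $p$ with $O\cap C=\emptyset$; but since every $x_n\in C$, we would have $\{n\in M:x_n\notin O\}=M\notin\mathcal{I}/_M$ (the sequence being $\mathcal{I}$-nonthin forces $M\notin\mathcal{I}$), contradicting $\mathcal{I}/_M$-convergence to $p$. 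Hence $p\in C$, and $C$ is $\mathcal{I}$-compact.

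The delicate points are the bookkeeping ones: confirming that Hausdorff separation of the compact $C$ from $\alpha$ supplies a neighbourhood of $\alpha$ of the prescribed complementary-$\mathcal{I}$-compact form, and that the definition of topological $\mathcal{I}$-convergence forces the $\mathcal{I}$-limit of a sequence lying entirely in the closed set $C$ to remain in $C$. I expect the main obstacle to be precisely this last step, namely checking that $C$ is effectively $\mathcal{I}$-closed so that the $\mathcal{I}$-limit produced by the $\mathcal{I}$-compactness of $K$ cannot escape $C$; once this is secured the theorem follows at once.
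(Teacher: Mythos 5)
Your proof is correct, but it takes a genuinely longer, more self-contained route than the paper's. The paper's argument is essentially two lines: a compact $K\subset X$ is compact, hence closed, in the Hausdorff space $\widehat{X}$; since $\widehat{X}$ is itself $\mathcal{I}$-compact (Theorem \ref{newtop}) and a closed subset of an $\mathcal{I}$-compact space is $\mathcal{I}$-compact, $e(K)$ is $\mathcal{I}$-compact, and this transfers back to $K$ because $\mathcal{I}$-compactness depends only on the subspace topology. You never invoke the global $\mathcal{I}$-compactness of $\widehat{X}$: instead you separate $C$ from $\alpha$ by disjoint open sets, use the explicit form of the neighbourhoods of $\alpha$ in $\widehat{\tau}$ to trap $C$ inside $K=X-W$, which is $\mathcal{I}$-compact by the very definition of the topology, and then prove by hand the heredity fact (a closed set contained in an $\mathcal{I}$-compact set is $\mathcal{I}$-compact) that the paper uses silently. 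Both arguments thus rest on the same two pillars---compactness implies closedness in Hausdorff spaces, and closedness passes $\mathcal{I}$-compactness downward---but your decomposition buys explicitness: the heredity lemma, which is exactly the ``delicate last step'' you flagged and which the paper imports without comment (presumably from \cite{Singha}), is actually verified, and correctly so, since if the limit $p$ escaped the closed set $C$ then all of $M$ would have to lie in $\mathcal{I}/_M$, contradicting $M\notin\mathcal{I}$. What the paper's route buys is brevity: once closedness in $\widehat{X}$ is known, $\widehat{X}$ itself serves as the ambient $\mathcal{I}$-compact space, so the separation argument and the analysis of basic neighbourhoods of $\alpha$ are unnecessary. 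The one claim in your write-up that deserves an explicit sentence is that every $\widehat{\tau}$-open set containing $\alpha$ contains a basic set $W\cup\{\alpha\}$ with $X-W$ closed and $\mathcal{I}$-compact in $X$; this is true because any such open set is a union of members of the defining family of $\widehat{\tau}$, at least one of which must contain $\alpha$.
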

\begin{proof}
	Let, $(X,\tau)$ be a topological space, it has a Hausdorff one point $\mathcal{I}$-compactification. Let, $K\subset X$ is a compact subset of $X$ and $e: X\rightarrow \widehat{X}$ be an embedding. Then, $e(K)$ is compact in $\widehat{X}$. Since $\widehat{X}$ is $T_2$, $e(K)$ is closed in $\widehat{X}$. This implies $e(K)\subset e(X)$ is $\mathcal{I}$-compact in $\widehat{X}$ and so $K$ is $\mathcal{I}$-compact.
\end{proof}

\begin{example}\label{noHausdorff}
	Since not every compact subset of $\mathbb{R}$ is $\mathcal{I}_d$-compact, so one point $\mathcal{I}_d$-compactification of $\mathbb{R}$ is not Hausdporff.
\end{example}

\begin{definition}(\cite{Singha})
	An admissible ideal $\mathcal{I}$ is said to satisfy shrinking condition{\rm(B)} if for any sequence of sets $\{A_i\}$ not in $\mathcal{I}$, there exists a sequence of sets $\{B_i\}$ in $\mathcal{I}$ such that each $B_i\subset A_i$  and $B={\overset{\infty}{\underset{i=1}\cup}}B_i\notin \mathcal{I}.$
\end{definition}

\begin{theorem}
	If $\mathcal{I}$ satisfies shrinking condition(B) and one point compactification of a topological space is metrizable, then one point $\mathcal{I}$-compactification is homeomorphic to one point compactification.
\end{theorem}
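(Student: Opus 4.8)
The plan is to show that the natural bijection $\varphi\colon X^{+}\to\widehat{X}$, where $X^{+}$ denotes the ordinary one point compactification, which fixes $X$ pointwise and sends the point $\infty$ of $X^{+}$ to the point $\alpha$ of $\widehat{X}$, is a homeomorphism. Both topologies restrict to the original topology $\tau$ on $X$, a basic neighbourhood of $\infty$ in $X^{+}$ has the form $U\cup\{\infty\}$ with $X-U$ closed and compact, and a basic neighbourhood of $\alpha$ in $\widehat{X}$ has the form $U\cup\{\alpha\}$ with $X-U$ closed and $\mathcal{I}$-compact. Hence it suffices to prove that a closed subset $C\subseteq X$ is compact if and only if it is $\mathcal{I}$-compact. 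Metrizability of $X^{+}$ enters only to guarantee that $X$, and therefore every subspace $C$, is metrizable, so that compactness and sequential compactness coincide on closed subsets of $X$.

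For the implication ``$\mathcal{I}$-compact $\Rightarrow$ compact'' I would argue directly, without condition (B). Let $C$ be closed and $\mathcal{I}$-compact and let $(y_k)_{k\in\mathbb{N}}$ be an arbitrary sequence in $C$. Since $\mathcal{I}$ is admissible, $\mathbb{N}\notin\mathcal{I}$, so this sequence is $\mathcal{I}$-nonthin, and $\mathcal{I}$-compactness yields $M\notin\mathcal{I}$ with $(y_k)_{k\in M}$ being $\mathcal{I}/_{M}$-convergent to some $\xi\in C$. Then for each $j$ the set $\{k\in M:d(y_k,\xi)<1/j\}$ is the complement in $M$ of a member of $\mathcal{I}$, so it is not in $\mathcal{I}$ and in particular is infinite; a diagonal selection produces a genuinely convergent subsequence with limit $\xi\in C$. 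Thus $C$ is sequentially compact, and being metric it is compact.

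The converse ``compact $\Rightarrow$ $\mathcal{I}$-compact'' is where condition (B) does the work, and I expect this to be the main obstacle. Let $C$ be compact and let $(x_n)_{n\in K}$ be $\mathcal{I}$-nonthin, $K\notin\mathcal{I}$. An iterated finite subcover of $C$ by balls of radius $2^{-i}$ produces a nested family $K=A_0\supseteq A_1\supseteq A_2\supseteq\cdots$ with each $A_i\notin\mathcal{I}$ (at each stage some member of the finite cover captures a non-$\mathcal{I}$ portion of the previous index set, because a finite union of $\mathcal{I}$-sets lies in $\mathcal{I}$) and with $\{x_n:n\in A_i\}$ of vanishing diameter; compactness then gives a single $\xi\in C$ with $d(x_n,\xi)<1/i$ for all $n\in A_i$. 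Now I would apply condition (B) to $\{A_i\}$ to obtain $B_i\in\mathcal{I}$ with $B_i\subseteq A_i$ and $M:=\bigcup_{i}B_i\notin\mathcal{I}$. The key point is that $M$ is a pseudo-intersection of the $A_i$: for fixed $j$, if $i\ge j$ then $B_i\subseteq A_i\subseteq A_j$, so $B_i\setminus A_j=\emptyset$, whence $M\setminus A_j\subseteq\bigcup_{i<j}B_i\in\mathcal{I}$. Consequently, for any $\epsilon>0$ and any $j$ with $1/j\le\epsilon$ we get $\{n\in M:d(x_n,\xi)\ge\epsilon\}\subseteq M\setminus A_j\in\mathcal{I}$, so $(x_n)_{n\in M}$ is an $\mathcal{I}$-nonthin subsequence that $\mathcal{I}/_{M}$-converges to $\xi\in C$, and $C$ is $\mathcal{I}$-compact.

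Combining the two directions, the closed compact subsets of $X$ are exactly the closed $\mathcal{I}$-compact subsets, so the neighbourhood filters of the added point agree under $\varphi$ and $\varphi$ is a homeomorphism; that is, $\widehat{X}$ is homeomorphic to the one point compactification. The only delicate step is the extraction in the ``compact $\Rightarrow$ $\mathcal{I}$-compact'' direction, where condition (B) is precisely the device that converts the decreasing chain of non-$\mathcal{I}$ index sets into a single non-$\mathcal{I}$ index set $M$ along which $\mathcal{I}$-convergence to $\xi$ holds.
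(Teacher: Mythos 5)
Your proposal is correct and takes essentially the same approach as the paper: both show that the natural bijection fixing $X$ and exchanging the two points at infinity is a homeomorphism, by reducing the matching of neighbourhood filters at infinity to the statement that a closed subset of the metrizable space $X$ is compact if and only if it is $\mathcal{I}$-compact, with shrinking condition (B) doing the work precisely in the direction compact $\Rightarrow$ $\mathcal{I}$-compact. The only difference is that the paper imports this equivalence from \cite{Singha}, whereas you prove both directions inline (the diagonal extraction for $\mathcal{I}$-compact $\Rightarrow$ compact, and the nested non-$\mathcal{I}$ index sets with the condition-(B) pseudo-intersection for the converse), which makes your argument self-contained.
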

\begin{proof}
	Let $\widehat{X}$ and $Y$ be the one-point $\mathcal{I}$-compactification of $X$ and one point compactification of $X$ respectively. Since $\mathcal{I}$-satisfies shrinking condition(B) and $Y$ is metrizable, this implies $Y$ is  $\mathcal{I}$-compact \cite{Singha}. Claim that, $Y$ and $\widehat{X}$ are homeomorphic. Let $\alpha_1$ and $\alpha_2$ be the point at infinity of $X$ with $\widehat{X}-X=\{\alpha_1\}$ and $Y-X=\{\alpha_2\}$. Define a mapping $h: \widehat{X}\rightarrow Y$ by
	$$h(x)=\left\{
	\begin{array}{ll}
		x & \hbox{,~if~$x\in X;$} \\
		\alpha_2 & \hbox{,~if~$x=\alpha_1.$}
	\end{array}
	\right.$$
	If $U$ be an open set in $\widehat{X}$ not containing $\alpha_1$, then $U$ is open in $Y$ also. Now let, $U$ is an open set in $\widehat{X}$ containing $\alpha_1$, then $C=\widehat{X}-U$ is closed in $\widehat{X}$. Since $\widehat{X}$ is $\mathcal{I}$-compact, $C$ is $\mathcal{I}$-compact in $\widehat{X}$. Also metrizability of $X$ implies $C$ is compact in $X$. Then $C$ is compact and hence closed in $Y$. Therefore, $h(U)=Y-C$ is open in $Y$ and so $h^{-1}$ is continuous.
	Now let $U$ be an open set in $Y$ containing $\alpha_2$, that is $Y-U=C$ is closed in $Y$. Then $C$ is compact in $Y$ and so in $X$. Also since $X$ is $T_2$, $C$ is closed in $X$ and since $\mathcal{I}$-satisfies shrinking condition(B), $C$ is $\mathcal{I}$-compact in $X$. Therefore, $(X-C)\cup\{\alpha_1\}$ is open in $\widehat{X}$. So $h^{-1}(U)=\widehat{X}-C$ is open in $\widehat{X}$ that is, $h$ is continuous.
\end{proof}

\begin{corollary}
	
	If $\mathcal{I}$ satisfies shrinking condition(B), then one point $\mathcal{I}$ compactification of $\mathbb{R}$ with usual topology is homeomorphic to $S^1$ as a subspace of $\mathbb{R}^2$ with usual topology.
\end{corollary}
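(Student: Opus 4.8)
The plan is to reduce this to the preceding theorem by identifying the classical one point compactification of $\mathbb{R}$. First I would recall the standard fact that the one point compactification $Y$ of $\mathbb{R}$ with its usual topology is homeomorphic to the circle $S^1$ regarded as a subspace of $\mathbb{R}^2$. This homeomorphism is realized concretely by inverse stereographic projection: sending $t\in\mathbb{R}$ to the second point where the line through the north pole and $(t,0)$ meets $S^1$ gives a homeomorphism of $\mathbb{R}$ onto $S^1$ minus the north pole, and mapping the adjoined point at infinity $\alpha$ to the north pole extends this to a continuous bijection $Y\to S^1$. One checks that a neighbourhood basis of the north pole pulls back to the complements of the compact subsets of $\mathbb{R}$, which is exactly the neighbourhood filter of $\alpha$ in $Y$, so the extension is a homeomorphism.

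Next I would observe that $S^1\subset\mathbb{R}^2$ is metrizable, being a subspace of the metric space $\mathbb{R}^2$. Transporting the metric across the homeomorphism of the previous step shows that $Y$, the one point compactification of $\mathbb{R}$, is metrizable.

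With metrizability in hand, the hypotheses of the preceding theorem are satisfied: $\mathcal{I}$ satisfies shrinking condition (B) and the one point compactification of $\mathbb{R}$ is metrizable. That theorem then yields a homeomorphism between the one point $\mathcal{I}$-compactification $\widehat{\mathbb{R}}$ and the one point compactification $Y$. Chaining this with the homeomorphism $Y\cong S^1$ of the first step gives $\widehat{\mathbb{R}}\cong S^1$, which is the assertion.

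The argument is almost entirely bookkeeping; the only content beyond the preceding theorem is the classical identification of $Y$ with $S^1$, and the single point I would be careful about is verifying that the point-at-infinity neighbourhoods (complements of compact sets) correspond correctly to neighbourhoods of the north pole, so that the stereographic bijection is a genuine homeomorphism rather than merely a continuous bijection. Since $\mathbb{R}$ is locally compact Hausdorff and $S^1$ is compact Hausdorff, this verification is routine, so I do not anticipate any real obstacle.
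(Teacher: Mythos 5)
Your proposal is correct and is exactly the argument the paper intends: the corollary is an immediate application of the preceding theorem, using the classical fact that the one point compactification of $\mathbb{R}$ is $S^1$ (hence metrizable as a subspace of $\mathbb{R}^2$), and then chaining the two homeomorphisms. The paper gives no separate proof of the corollary, and your careful check of the stereographic identification is the only nontrivial ingredient beyond the theorem itself.
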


As shown in Example \ref{noHausdorff}, $\mathbb{R}$ with usual topology may not have Hausdorff one point $\mathcal{I}$-compactification for some ideals, one of such one point $\mathcal{I}$-compactification is as follows:

\begin{example} One point $\mathcal{I}$-compactification of $\mathbb{R}$ with usual topology $\mathcal{U}$ is a circle $S^1$ with topology $\tau_{S^1}$ consisting of open subset of $S^1-\{\alpha\}$ considered as a subspace of $\mathbb{R}^2$ and cofinite subset of $S^1$ containing $\alpha$, where $\alpha$ is the point at infinity of $\mathbb{R}$.
	
	Define a mapping $e: \mathbb{R}\rightarrow S^1-\{\alpha\}$ by
	$$e(x)=\left\{
	\begin{array}{ll}
		(x,\sqrt{1-x^2}) & \hbox{,~if~$|x|\leq1;$} \\
		(\frac{2x}{x^2+1},\frac{x^2-1}{x^2+1}) & \hbox{,~if~$|x|>1$}
	\end{array}
	\right.$$
	Then, $e$ is a bijection.
	Claim that, $S^1$ with $\tau_{S^1}$ is an one point $\mathcal{I}$-compactification of $\mathbb{R}$. Let us consider an $\mathcal{I}$-nonthin sequence $(x_n)_{n\in M}$ in $S^1$ which has no $\mathcal{I}$-nonthin subsequence $(x_n)_{n\in L}$ that $\mathcal{I}/_L$-convergent in $S^1-\{\alpha\}$, then $(x_n)_{n\in M}$ $\mathcal{I}/_M$-converges to $\alpha$. Otherwise, there exists an open set $U\cup\{\alpha\}$ containing $\alpha$ such that $L=\{n\in M; x_n\notin U\cup\{\alpha\}\}\notin\mathcal{I}/_M$. Then $(x_n)_{n\in L}$ is an $\mathcal{I}$-nonthin sequence in $S^1-\{\alpha\}$. Also since $S^1-(U\cup\{\alpha\})$ is finite say, $\{a_1,a_2,...,a_m\}$ that is, $x_n\in \{a_1,a_2,...,a_m\}, n\in L$. This implies, $(x_n)_{n\in L}$ has an $\mathcal{I}$-nonthin subsequence $(x_n)_{n\in K}$ that $\mathcal{I}/_K$-converges to one of such $a_i$, which contradicts our assumption. Hence, $S^1$ with $\tau_{S^1}$ is $\mathcal{I}$-compact. Also, $e: \mathbb{R}\rightarrow S^1$ is an embedding, since $e: \mathbb{R}\rightarrow e(\mathbb{R})$ is a homeomorphism and $e(\mathbb{R})=S^1-\{\alpha\}$ is a dense subspace of $S^1$. Hence, one point $\mathcal{I}$-compactification of $\mathbb{R}$ with usual topology is $S^1$ with $\tau_{S^1}$ which is not Hausdorff.

\end{example}


\end{document}